\numberwithin{equation}{section}
\newtheorem{thm}{Theorem}[section]
\newtheorem{cor}[thm]{Corollary}
\newtheorem{lem}[thm]{Lemma}
\theoremstyle{definition}
\newtheorem{defn}[thm]{Definition}
\theoremstyle{remark}
\begin{document}

\begin{center}  {\huge\textbf{Interface Development for the Nonlinear Degenerate Multidimensional Reaction-Diffusion Equations }}
\par \medskip\bigskip\end{center}
\begin{center} {\Large\textsc{Ugur G. Abdulla and Amna Abu Weden}}
\par \medskip\bigskip\end{center}
\begin{center} {\large\noindent \textsc{Department of Mathematics, Florida Institute of Technology, Melbourne, Florida 32901}}
\par \medskip\bigskip\end{center}
{\bf Abstract.} 
This paper presents a full classification of the short-time behavior of the interfaces in the Cauchy problem for the nonlinear second order degenerate parabolic PDE
\[ u_t-\Delta u^m +b u^\beta=0, \ x\in \mathbb{R}^N, 0<t<T \]
with nonnegative initial function $u_0$ such that
\[ supp~u_0 = \{|x|<R\}, \ u_0 \sim  C(R-|x|)^\alpha,  \quad{as} \ |x|\to R-0, \]
where $m>1, C,\alpha, \beta >0, b \in \mathbb{R}$. Interface surface $t=\eta(x)$ may shrink, expand or remain stationary depending on the relative strength of the diffusion and reaction terms near the boundary of support, expressed in terms of the parameters $m,\beta, \alpha, sign\ b$ and $C$. In all cases we prove explicit formula for the interface asymptotics, and local solution near the interface.

\section{Inrtroduction}
Consider the Cauchy problem for the Reaction-Diffusion equation:
\begin{equation}\label{CauchyProblem1}
Lu=u_t-\Delta u^m+bu^{\beta}=0,\ x\in \mathbb{R}^N, 0<t<T,
\end{equation}
\begin{equation}\label{CauchyProblem2}
u(x,0)=u_0(x), \quad x\in \mathbb{R}^N,
\end{equation}
where~$m>1,\ b\in \mathbb{R},\ \beta>0$. Equation \eqref{CauchyProblem1} is a nonlinear degenerate parabolic equation arising in various
applications in fluid mechanics, plasma physics, population dynamics etc. as a mathematical model
of nonlinear diffusion phenomena in the presence of absorption of energy \cite{Aris1975, Barenblatt1952,Kalashnikov1987,Bear1972}.
Assume that $u_0\in C(\mathbb{R}^N; \mathbb{R}^+)$ is radially symmetric with
\[
supp\ u_0=\overline{B_R}
\]
where $B_R:=\{x\in \mathbb{R}^N,\ |x|<R\}$,
and
\begin{equation}\label{CauchyProblem3}
 u_0(x)\sim C(R-|x|)^\alpha~as~|x|\rightarrow R-0
\end{equation}
for some $C>0, \alpha>0$. Typical example is
\begin{equation}\label{CauchyProblem4}
u_0(x)= C(R-|x|)^{\alpha}_+,\ x\in \mathbb{R}^N
\end{equation}
where $\kappa_+=\max\{\kappa;0\}$. Solution of the Cauchy Problem \eqref{CauchyProblem1},\eqref{CauchyProblem2} is understood in a weak sense (Definition~\ref{weaksolution}, Section~\ref{sec:prelim}). Furthermore, we will assume that $\beta\geq 1$ if $b<0$, which is essential to guarantee uniqueness of the solution. Weak solution possesses a finite speed of propagation property, meaning that it is compactly supported for any $t>0$ \cite{Barenblatt1952}. Boundary manifolds of the support of solution are called "free boundaries" or "interfaces". The main goal of this paper is to analyze short-time behavior of interfaces emerging from sphere $\partial B_R\times\{t=0\}$.

For all\ $x\in B_R$\ near the boundary define the interface surface as
\[
t=\eta_-(x)\coloneqq \sup\{\tau:u(x,t)>0,0<t<\tau\}.
\]
If\ $\eta_-(x)$~is defined and finite for all\ $x$\ such that\ $0<<|x|<R$\ and\ 
\begin{equation}\label{osmall}
\eta_-(x)=o(1) \quad\text{for} \ |x|\rightarrow R-0,
\end{equation}
then we say that the interface initially shrinks at\ $\partial B_R$. For all\ $x\in B^{c}_R=\{|x|>R\}$\ near the boundary define interface surface
\[
t=\eta_+(x)\coloneqq \inf\{\tau\geq 0:u(x,t)>0, \tau<t<\tau+\epsilon \quad\text{for some} \ \epsilon>0  \}.
\]
If\ $\eta_+(x)$\ is defined, positive and finite for all\ $x$\ such that\ $R<|x|<<+\infty$\ and satisfies \eqref{osmall}, 
 then we say that the interface initially expands at\ $\partial B_R$. If
 \[ supp~u(x,t)\equiv supp~u_0(x) \]
 for all\ $0\leq t\leq \delta,$\ for some\ $\delta>0,$\ then we say that interface remain stationary, or solution has a waiting time near the support of the initial function.

The goal of this paper is to present full classification of the existence and short time behavior of the interfaces $\eta_{\pm}$, and local solution near $\eta_{\pm}$ in terms of the parameters $m,\beta,b,C,\alpha$. 

The outline of the paper is as follows. In Section~\eqref{sec:mainresult} we formulate the main results. Theorems~\ref{theorem1}-\ref{theorem5} of Section~~\ref{sec:mainresult} present full classification of the short-time behavior and asymptotics of the interfaces with respect to relative strength of diffusion versus reaction/absorption expressed in respective four regions of the parameter space $(\alpha,\beta)$.  Some essential technical details of the main results are outlined in Section~\ref{sec: details of the main results}. In Section~\ref{sec:prelim} 
we present brief literature review and prove important asymptotic properties of local solutions along the special interface-type manifolds by using rescaling and application of the general theory of nonlinear degenerate parabolic equations. Finally, in Section~\ref{proofs} we prove the main results by using asymptotic estimations of Section~\ref{sec:prelim} 
and by constructing local super- and subsolutions based on the special comparison theorems in general non-cylindrical domains with irregular and characteristic boundary manifolds.

\section{Description of main results.}\label{sec:mainresult}
Throughout this section we assume that $u$ is a unique weak solution of the CP \eqref{CauchyProblem1}-\eqref{CauchyProblem2}. There are four different subcases, as shown in Figure 1. The main results are outlined below in Theorems~\ref{theorem1}-\ref{theorem5} corresponding directly to the cases (1), (2), (3) and (4).

 \begin{figure}
		\begin{center}
\begin{tikzpicture}[xscale=1.5, yscale=0.5]
\draw[->, thick] (0,0)--(2.5,0);
\draw[->,thick] (0,0)--(0,6);
\node[below left] at (0,0) {$0$};
\node[fill, shape=circle, label=180:{$1$}, inner sep=1pt] at (0,2) {};
\node[fill, shape=circle, label=180:{$m$}, inner sep=1pt] at (0,4) {};
\node[fill, shape=circle, label=-90:{$\displaystyle  \frac{2}{m-1}$}, inner sep=1pt] at (.71,0) {};
\node[left] at (0,6.4) {\scalefont{1.5} $\beta$};
\node[below] at (2.6,0) {\scalefont{1.5} $\alpha$};
\draw[domain=0:2.5, smooth] plot ({1/(3-\x)},\x);
\node at (2.3,2.9) {$\alpha =2/(m-\beta)$};
\node at (0.3,4.6) {\scalefont{1.1} $(1)$};
\node (2) at (0.2,1.7) {(2)};
\node (3) at (.9,.6) {(3)};
\node (4a) at (0.5,2.5) {(4a)};
\node (4b) at (1.5,.8) {(4b)};
\node (4c) at (1.8,2) {(4c)};
\node (4d) at (1.2,3.9) {(4d)};
\node (*) at (0.5,.6) {};
\node (**) at (.71,1.5) {};
\node (***) at (1.4,1.74) {};
\draw[ ->][ left]
  (2) edge (*);
 \draw[ ->][ left]
  (4a) edge (**);
   \draw[ ->][ left]
  (4b) edge (***);
\draw (.71,1.6)--(.71,5);
\draw (.7,1.62)--(2,1.62);
\draw ( (0.5,1) {};
\end{tikzpicture}
\caption{Classification of different cases in the ($\alpha$,$\beta$) plane for interface development in problem \eqref{CauchyProblem1}, \eqref{CauchyProblem2}, \eqref{CauchyProblem3}.}
		\label{fig:1}
		\end{center}
		\end{figure}
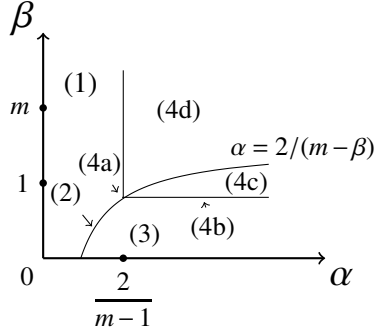
\begin{thm}\label{theorem1}
If $\alpha <\frac{2}{m-min\{1,\beta\}}$, then the interface initially expands and
\begin{equation}\label{eta+}
\eta_+(x)\sim \Big ( \frac{R-|x|}{\xi_*}\Big )^{2+\alpha(1-m)}~\mathrm{as}\  \ |x|\to R+,
\end{equation}
where \ $ \xi_*=\xi_*(C,\alpha,m)<0$. For arbitrary $\xi_*<\rho<0$, there exists a positive number $f(\rho)$ depending on $C,m,\alpha$ such that
\begin{equation}\label{localsolution1}
u \ \Big |_{|x|=R-\rho t^{\frac{1}{2+\alpha(1-m)}}}\sim f(\rho) t^{\frac{\alpha}{2+\alpha(1-m)}}~\mathrm{as}\ t\downarrow 0.
\end{equation}
\end{thm}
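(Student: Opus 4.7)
The plan is to establish \eqref{eta+} and \eqref{localsolution1} by constructing self-similar upper and lower barriers that approximate $u$ near $\partial B_R\times\{t=0\}$ and invoking the comparison principle on non-cylindrical domains with characteristic lateral boundary. First I exploit the radial symmetry to reduce \eqref{CauchyProblem1} to a single spatial variable $r=|x|$. Setting $y=R-r$ near the boundary, the equation becomes
\[
u_t = (u^m)_{yy} - \tfrac{N-1}{R-y}(u^m)_y - bu^{\beta}.
\]
The key observation is that, on the expected scale $u\sim y^{\alpha}$, $y\sim t^{1/(2+\alpha(1-m))}$, both the curvature term and the reaction term are of lower order than $u_t$ and $(u^m)_{yy}$ exactly when $\alpha<2/(m-\min\{1,\beta\})$. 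Thus the limiting model is the 1D porous medium equation, and this is the regime treated by Theorem~\ref{theorem1}.

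Next I introduce the self-similar variables $\xi=y/t^{1/(2+\alpha(1-m))}$, $u=t^{\alpha/(2+\alpha(1-m))}f(\xi)$, suggested by the scaling invariance of the 1D PME together with the boundary exponent $\alpha$. Inserting into the reduced equation leads (formally, as $t\downarrow 0$) to the ODE
\[
(f^m)'' + \tfrac{1}{2+\alpha(1-m)}\bigl(\xi f'-\alpha f\bigr)=0,
\]
on $(\xi_*,+\infty)$, with interface condition $f(\xi_*)=0$ and matching $f(\xi)\sim C(-\xi)^{\alpha}$ as $\xi\to-\infty$. Since $2+\alpha(1-m)>0$ in the current case, classical shooting/monotonicity arguments from porous medium theory produce a unique profile $f=f(\cdot\,;C,m,\alpha)$ and a unique interface position $\xi_*=\xi_*(C,m,\alpha)<0$; the expansion $\xi_*<0$ is forced because the absence of reaction leaves pure diffusion, which pushes the support outward. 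The profile value $f(\rho)$ in \eqref{localsolution1} is then defined to be exactly this ODE solution at $\xi=\rho$.

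The main step is the construction of super- and subsolutions from this profile. For small $\varepsilon>0$ I take perturbed parameters $C^{\pm}=C(1\pm\varepsilon)$ and corresponding profiles $f^{\pm}$ with interfaces $\xi^{\pm}=\xi_*(C^{\pm},m,\alpha)$, and consider
\[
u^{\pm}(y,t)=(t+\tau^{\pm})^{\frac{\alpha}{2+\alpha(1-m)}} f^{\pm}\!\left(\frac{y}{(t+\tau^{\pm})^{1/(2+\alpha(1-m))}}\right),
\]
for small time shifts $\tau^{\pm}\geq 0$. Using the asymptotic \eqref{CauchyProblem3} one arranges $u^-\leq u_0\leq u^+$ at $t=0$ in a neighborhood of $\partial B_R$, and pairs these with a uniform positive barrier on an inner cylindrical surface $|x|=R-\delta$ guaranteed by the standard theory (Section~\ref{sec:prelim}). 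The scale analysis in the first paragraph is then used quantitatively to show that in a parabolic neighborhood $\{|y|\leq\delta,\ 0<t\leq t_0\}$ the curvature and reaction contributions to $Lu^{\pm}$ are strictly dominated by the diffusion/time terms and are absorbed by the $\varepsilon$-perturbation of $C$; hence $Lu^{+}\geq 0\geq Lu^{-}$ weakly.

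Finally, applying the comparison theorem for non-cylindrical domains with characteristic free-boundary pieces (Section~\ref{proofs}) gives $u^-\leq u\leq u^+$ in the neighborhood. Reading off the support of $u^{\pm}$ yields $\eta_+(x)\sim ((R-|x|)/\xi^{\pm})^{2+\alpha(1-m)}$, and sending $\varepsilon,\tau^{\pm}\to 0$ produces \eqref{eta+}. Restricting the sandwich inequality to the curve $|x|=R-\rho t^{1/(2+\alpha(1-m))}$ and again letting $\varepsilon,\tau^{\pm}\to 0$ produces \eqref{localsolution1}. The main obstacle is the verification that $Lu^{\pm}$ has the right sign throughout the neighborhood: both the curvature correction $-(N-1)(R-y)^{-1}(u^m)_y$ and the reaction $bu^{\beta}$ must be controlled uniformly up to the characteristic free boundary of $u^{\pm}$, which is precisely where the equation degenerates and where the generalized comparison results of Section~\ref{proofs} are essential.
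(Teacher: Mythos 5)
Your overall strategy (reduce to the 1D self-similar profile, sandwich $u$ between perturbed profiles, apply the non-cylindrical comparison theorem) differs from the paper's, which instead proves the two-sided asymptotics \eqref{localsolution1} by a blow-up rescaling $v_k(y,t)=kv(k^{-1/\alpha}y,k^{(\alpha(m-1)-2)/\alpha}t)$ whose limit is identified, via uniqueness for the Cauchy problem, with the exact 1D self-similar solution, and then obtains the lower bound on $\eta_+$ by comparing $u$ with the planar majorant $w(y_1,t)$ built from the initial-data inequality $(y_1)_+^\alpha\ge(R-|y-\bar{x}|)_+^\alpha$. The decisive advantage of the rescaling is that the curvature term $\frac{N-1}{r}(u^m)_r$ and the reaction term $bu^\beta$ disappear in the limit $k\to\infty$ (the sphere flattens to a hyperplane and the reaction coefficient $k^{(\alpha(m-\beta)-2)/\alpha}$ tends to zero), so no sign condition on these terms ever has to be verified.

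This is exactly where your argument has a genuine gap. Your barriers $u^{\pm}$ are built from exact solutions of the 1D profile equation, so $u^{\pm}_t-(u^{\pm m})_{yy}=0$ identically; there are no ``diffusion/time terms'' left over to dominate anything, and replacing $C$ by $C(1\pm\varepsilon)$ produces another exact solution, not a strict super- or subsolution --- it changes the ordering of the initial data but creates no slack in the differential inequality. Consequently $Lu^{\pm}=\frac{N-1}{R-y}(u^{\pm m})_y+bu^{\pm\beta}$ exactly, and since the profile is nondecreasing in $y$, both terms are nonnegative when $b\ge 0$: your $u^{+}$ is a supersolution for free, but $u^{-}$ is \emph{not} a subsolution --- the curvature and reaction contributions enter with the unfavorable sign and cannot be absorbed by the $C$-perturbation. (Symmetrically, for $b<0$, $\beta\ge 1$, which the hypothesis $\alpha<2/(m-\min\{1,\beta\})$ allows, the supersolution side fails; the paper handles this case with the explicit time-reparametrized supersolution $e^{-bt}u_\epsilon\big(x,(b(1-m))^{-1}[e^{b(1-m)t}-1]\big)$.) To repair your construction you would have to deform the profile or multiply by a time-dependent factor so as to manufacture a strict sign, or else replace the subsolution step by the rescaling and convergence argument of Lemma~\ref{lemma1}, which is what the paper does. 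Two minor points: your matching condition $f(\xi)\sim C(-\xi)^\alpha$ as $\xi\to-\infty$ has the orientation reversed relative to \eqref{selfsimilarODE}, and $\xi_*<0$ is a cited property of the self-similar profile from the one-dimensional theory rather than something forced by your sketch.
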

\begin{thm}\label{theorem3}
Let $b>0,$\ $0<\beta<1,$\ $\alpha=\frac{2}{m-\beta}, m+\beta\neq 0$ and
\[C_*=\Big\{\frac{b(m-\beta)^2}{2m(m+\beta)}\Big\}^{\frac{1}{m-\beta}}\]
If $C>C_*$ then interface initially expands and
\begin{equation}\label{eta_+1}
\eta_+(x)\sim \Big(\frac{|x|-R}{\zeta_+}\Big)^{\frac{2(1-\beta)}{m-\beta}}~\mathrm{as}\  \ |x|\rightarrow R+,
\end{equation}
while if $C<C_*$ then interface initially shrinks and
\begin{equation}\label{eta_-1}
\eta_-(x)\sim \Big(\frac{R-|x|}{\zeta_-}\Big)^{\frac{2(1-\beta)}{m-\beta}}~\mathrm{as}\  \ |x|\rightarrow R-,
\end{equation}
where $\zeta_+\in[\zeta_1;\zeta_*]$,  $\zeta_-\in[\zeta_2;\zeta_*]$(see Appendix for constants $\zeta_1,\zeta_2$), $\zeta_*=\zeta_*(C,m,\beta,b)\gtrless 0$ \ according to as\ $C\lessgtr C_*$. 
For arbitrary $\rho>\zeta_*$ there exists $h(\rho)>0$ such that
\begin{equation}\label{asympsolution2}
u(x,t) \ \Big |_{|x|=R-\rho t^{\frac{m-\beta}{2(1-\beta)}}}\sim h(\rho)t^{\frac{1}{1-\beta}}\qquad~\mathtt{as}\ \ t\downarrow 0.
\end{equation}
\end{thm}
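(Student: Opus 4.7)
The natural scaling in the critical regime $\alpha=2/(m-\beta)$ is the one that simultaneously balances $u_t$, $\Delta u^m$ and $bu^\beta$. It is
\[
u(x,t)=t^{\frac{1}{1-\beta}}\,F(\xi),\qquad \xi=\frac{R-|x|}{t^{p}},\qquad p=\frac{m-\beta}{2(1-\beta)},
\]
and this similarity ansatz is the backbone of the argument. Substituting into $Lu=0$ and discarding the radial curvature term $-(N-1)|x|^{-1}(u^m)_r$, which is $O(R^{-1})$ times a lower-order quantity as $|x|\to R$, yields the ODE
\[
\tfrac{1}{1-\beta}F-p\xi F'-(F^m)''+bF^{\beta}=0
\]
on $\{\xi>\zeta\}$, with free-boundary condition $F(\zeta)=0$ and matching condition $F(\xi)\sim C\xi^{\alpha}$ as $\xi\to+\infty$ coming from \eqref{CauchyProblem3}.

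First I would analyse this ODE to expose the role of $C_*$. Using the identities $p\alpha=1/(1-\beta)$ and $m\alpha-2=\alpha\beta$ (both forced by $\alpha=2/(m-\beta)$), the trial profile $F(\xi)=C(\xi-\zeta)_+^{\alpha}$ makes the leading-order $\xi^{\alpha}$-coefficient vanish identically. The surviving subleading balance (between the drift term $-p\zeta\alpha C\xi^{\alpha-1}$ and the diffusion/reaction pair $-C^{m}m\alpha(m\alpha-1)(\xi-\zeta)^{\alpha\beta}+bC^{\beta}(\xi-\zeta)^{\alpha\beta}$) pins down both the constant $C_*=\{b(m-\beta)^{2}/(2m(m+\beta))\}^{1/(m-\beta)}$ as the unique amplitude for which $\zeta=0$ is admissible, and the sign rule $\operatorname{sgn}\zeta_*=\operatorname{sgn}(C_*-C)$. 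This is the source of \eqref{eta_+1}--\eqref{eta_-1}.

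The two-sided bounds $\zeta_\pm\in[\zeta_{1,2},\zeta_*]$ would then be derived by constructing explicit one-parameter families of super- and subsolutions of the form
\[
\overline u(x,t)=(t+\tau)^{\frac{1}{1-\beta}}\Phi_+\!\!\left(\tfrac{R-|x|}{(t+\tau)^{p}}\right),\qquad \underline u(x,t)=(t+\tau)^{\frac{1}{1-\beta}}\Phi_-\!\!\left(\tfrac{R-|x|}{(t+\tau)^{p}}\right),
\]
with $\Phi_\pm$ chosen as piecewise-power perturbations of the similarity profile supported on $\{\xi>\zeta_\pm^{\mathrm{test}}\}$, the front positions $\zeta_\pm^{\mathrm{test}}$ being taken slightly on the two sides of the conjectured interval. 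Substitution into $L$ produces an ODE residual whose sign can be fixed by a single free constant in $\Phi_\pm$; the radial correction is absorbed into this residual by exploiting the small shift $\tau$ and restricting the comparison to a non-cylindrical neighbourhood of $\partial B_R\times\{0\}$. Compatibility with \eqref{CauchyProblem3} is automatic once the tail amplitude of $\Phi_\pm$ is calibrated to $C$, and compatibility along the characteristic free-boundary portion is handled by the comparison theorems of the later sections. Finally, the local representation \eqref{asympsolution2} follows by freezing $\rho>\zeta_*$ in the rescaled variable $v(\xi,s)=s^{-1/(1-\beta)}u(R-\xi s^{p},s)$ and passing $s\downarrow 0$: compactness and local H\"older regularity for the porous-medium operator force $v(\rho,s)\to\Phi(\rho)=:h(\rho)$, where $\Phi$ is the similarity profile with front at $\zeta_*$.

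The main obstacle is the criticality itself. Away from $\alpha=2/(m-\beta)$ either diffusion or reaction dominates and one can use purely Barenblatt-type or purely ODE-type barriers, as in Theorem~\ref{theorem1}. At the critical exponent neither term controls the front alone, so barriers must be built from the full similarity ODE carrying both nonlinearities, and one must simultaneously (i)~exploit the cancellation of the $\xi^{\alpha}$-coefficient, (ii)~match the power-type initial datum up to $o(R-|x|)^{\alpha}$, and (iii)~keep uniform control of the radial correction as $t\downarrow 0$. It is precisely the difficulty of aligning these three requirements sharply that forces the conclusions \eqref{eta_+1}--\eqref{eta_-1} to come with interval-valued bounds $[\zeta_{1,2},\zeta_*]$ rather than a single exact constant.
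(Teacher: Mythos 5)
Your overall philosophy (self-similar scaling with exponents $1/(1-\beta)$ and $p=(m-\beta)/(2(1-\beta))$, reduction to a similarity ODE, and a blow-up/compactness argument for \eqref{asympsolution2}) is the right one and matches the spirit of the paper, but two steps in your plan are genuinely problematic. First, your mechanism for identifying $C_*$ is wrong as stated: with $F(\xi)=C(\xi-\zeta)_+^{\alpha}$ the drift residue $-p\zeta\alpha C(\xi-\zeta)^{\alpha-1}$ and the diffusion/reaction residue $\bigl(bC^{\beta}-m\alpha(m\alpha-1)C^{m}\bigr)(\xi-\zeta)^{\alpha\beta}$ carry \emph{different} powers of $(\xi-\zeta)$ unless $m+\beta=2$ (since $\alpha-1=\alpha\beta$ iff $m+\beta=2$), so there is no ``surviving subleading balance'' that pins down $\zeta$. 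The correct statement is only that $C_*\xi_+^{\alpha}$ is an exact stationary solution (the $\zeta=0$ residue $bC^{\beta}-\frac{2m(m+\beta)}{(m-\beta)^{2}}C^{m}$ vanishes precisely at $C=C_*$), which gives the sign rule for $\zeta_*$ heuristically but does not by itself produce the profile $h$, its finite interface $\zeta_*$, or the constants $\zeta_1,\zeta_2$. Those objects are the actual content of the theorem, and in the paper they are imported from the one-dimensional theory of \cite{Abdulla1} rather than rederived; your plan presupposes them (you place your test fronts ``slightly on the two sides of the conjectured interval'') without any route to establishing them.

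Second, your proposed $N$-dimensional barrier construction replaces the paper's key device and leaves the hardest step unaddressed. The paper never builds radial super/subsolutions with an absorbed curvature term for this theorem: the sharp-constant side ($\zeta_*$) comes from the rescaling $v^{k}(y,t)=kv(k^{(\beta-m)/2}y,k^{\beta-1}t)$, uniform bounds, and identification of the limit with the unique 1D self-similar solution (Lemma~\ref{lemma3}, using uniqueness from \cite{KPV}); the non-sharp side ($\zeta_1$ or $\zeta_2$) comes from the planar comparison $(y_1)_+^{\alpha}\geq(R-|y-\bar x|)_+^{\alpha}$, which lets one dominate the radial solution by a one-dimensional solution $w(y_1,t)$ and then invoke the explicit upper bounds of \cite{Abdulla1} (estimates \eqref{VW2}, \eqref{m=beta,2-2}, \eqref{upper11}). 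This planar trick is exactly what makes the supersolution dominate the data globally while vanishing on the correct side of the front; your ``piecewise-power perturbations with a single free constant'' would have to reproduce that two-sided control near a characteristic free boundary, and you give no indication of how the sign of the residual can be fixed simultaneously in the region where $F$ is small (reaction dominates) and where $F\sim C\xi^{\alpha}$ (diffusion dominates). Also note that your compactness argument for \eqref{asympsolution2} only yields subsequential limits; you need the uniqueness of the limiting 1D Cauchy problem to conclude convergence of the whole family, which is the step the paper makes explicit.
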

\begin{cor}\label{corollary}
If conditions of Theorem~\ref{theorem3} are satisfied and $m+\beta=2$, then claims \eqref{eta_+1},\eqref{eta_-1},\eqref{asympsolution2}
are valid with
\begin{equation}\label{m+beta=2}
\zeta_+=\zeta_-= \zeta_*=b(1-\beta)C^{\beta-1}\Big(1-\big(C/C_*\big)^{2(1-\beta)}\Big), \ h(\rho)=C(\rho-\zeta_*)_+.
\end{equation}
\end{cor}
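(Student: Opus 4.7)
The plan is to observe that when $m+\beta=2$ the exponents $\alpha=\frac{2}{m-\beta}=\frac{1}{1-\beta}$ and $\frac{2(1-\beta)}{m-\beta}=1$ become exactly compatible with a planar traveling-wave ansatz
\[
U(x,t)=C\bigl[(R-|x|+\gamma t)_+\bigr]^{\frac{1}{1-\beta}},
\]
so the content of the corollary reduces to pinning down $\gamma$ and then checking that this wave actually governs the short-time interface behavior.

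First I would substitute $U$ into the planar (one-dimensional) version of the operator $L$. Because $m=2-\beta$, the three terms $U_t$, $(U^m)_{xx}$, and $bU^\beta$ are all proportional to the same power $\bigl[(R-|x|+\gamma t)_+\bigr]^{\beta/(1-\beta)}$, and matching coefficients produces the single scalar equation
\[
\frac{C\gamma}{1-\beta}+bC^\beta=\frac{mC^m}{(1-\beta)^2}.
\]
Solving for $\gamma$, using $m-1=1-\beta$, and simplifying against the definition $C_*^{m-\beta}=\frac{b(m-\beta)^2}{2m(m+\beta)}$ with $m+\beta=2$ recovers exactly $\gamma=-\zeta_*$ for the $\zeta_*$ given in \eqref{m+beta=2}. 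In particular $\gamma\gtrless 0$ iff $C\gtrless C_*$, in agreement with the expansion/shrinkage dichotomy of Theorem~\ref{theorem3}.

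In the genuine $N$-dimensional radial problem the spherical Laplacian contributes an extra term $\frac{N-1}{r}(u^m)_r$ which prevents $U$ from being an exact solution. I would handle this perturbatively by forming sub- and supersolutions
\[
U^{\pm}(x,t)=C\bigl[(R-|x|+(\gamma\pm\varepsilon)t)_+\bigr]^{\frac{1}{1-\beta}},
\]
and choosing a thin annulus $\{R-\delta<|x|<R+\delta\}\times(0,T_\varepsilon)$ with $\delta=\delta(\varepsilon)$ small enough that on its interior the $O(1/R)$ curvature term is absorbed into the $\pm\varepsilon$ gap, yielding the strict inequalities $LU^-\le 0\le LU^+$. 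The hypothesis $u_0\sim C(R-|x|)^{1/(1-\beta)}$ gives $U^-(\cdot,0)\le u_0\le U^+(\cdot,0)$ on the annulus up to an error that can be absorbed into $\varepsilon$. The non-cylindrical, characteristic-boundary comparison theorems which already underlie the proof of Theorem~\ref{theorem3} (Section~\ref{proofs}) then sandwich $u$ between $U^-$ and $U^+$, and letting $\varepsilon\downarrow 0$ pinches $u$ to $U$. This immediately gives $\eta_{\pm}(x)\sim(|x|-R)/\gamma$, which is \eqref{eta_+1}/\eqref{eta_-1} with $\zeta_+=\zeta_-=\zeta_*$, and, on the ray $|x|=R-\rho t$, $u\sim C(\rho-\zeta_*)_+^{1/(1-\beta)}t^{1/(1-\beta)}$, matching \eqref{asympsolution2} with the $h(\rho)$ declared in \eqref{m+beta=2}.

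The main obstacle is that the front $\{U=0\}$ is a characteristic surface for the degenerate operator, so control of $u$ on the outer lateral boundary $|x|=R\pm\delta$ of the annulus cannot be achieved by a classical maximum principle. One must instead quantify the $O(\varepsilon,\delta,1/R)$ errors so that the characteristic-boundary comparison of Section~\ref{proofs} applies uniformly as $\varepsilon\downarrow 0$; the algebraic identification of $\zeta_*$ itself is routine once the traveling-wave ansatz is guessed, so essentially all of the real work is in this comparison bookkeeping.
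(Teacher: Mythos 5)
Your traveling-wave computation is correct ($\gamma=-\zeta_*$ does follow from matching the coefficients of $s^{\beta/(1-\beta)}$, $s=(R-|x|+\gamma t)_+$, against the identity $C_*^{m-\beta}=b(1-\beta)^2/m$ when $m+\beta=2$), and your overall strategy works, but it is genuinely different from the paper's. The paper obtains the corollary as the special case $m+\beta=2$ of the proof of Theorem~\ref{theorem3}: one side of the interface estimate and the local asymptotics \eqref{asympsolution2} come from Lemma~\ref{lemma3}, a rescaling/blow-up argument showing that $kv(k^{(\beta-m)/2}y,k^{\beta-1}t)$ converges to the one-dimensional self-similar solution, which for $m+\beta=2$ degenerates to the explicit traveling wave \eqref{explicitsol}; the opposite side comes from dominating the radial initial datum by the planar one via $(y_1)_+^{\alpha}\geq (R-|y-\bar{x}|)_+^{\alpha}$ and comparing with the exact planar solution $w(y_1,t)=C(y_1-\zeta_*t)_+^{1/(1-\beta)}$, which is automatically an exact solution of the $N$-dimensional PDE, so no curvature term ever appears. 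That planar comparison is one-sided only, which is why the rescaling lemma is indispensable in the paper; your annular construction with speeds $\gamma\pm\varepsilon$ delivers both bounds at once and bypasses Lemma~\ref{lemma3} entirely, at the price of the perturbative bookkeeping. That bookkeeping does close: $-\frac{N-1}{r}(U^m)_r=\frac{N-1}{r}\frac{mC^m}{1-\beta}\,s^{1/(1-\beta)}$ is nonnegative (so it helps the supersolution) and is $o\bigl(s^{\beta/(1-\beta)}\bigr)$ as $s\downarrow 0$, so on a thin annulus it is absorbed by the $\varepsilon\frac{C}{1-\beta}s^{\beta/(1-\beta)}$ margin of the subsolution.

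Three points need tightening. First, perturbing only the speed cannot accommodate the $\sim$ in \eqref{CauchyProblem3}: at $t=0$ the speed is invisible, so you must also perturb the amplitude to $C\pm\epsilon$ (with the correspondingly shifted speed), which is exactly the device \eqref{local1}--\eqref{local3} of the paper, and then pass $\epsilon\downarrow 0$ using continuity of $\zeta_*$ in $C$. Second, your worry about characteristic boundaries is misplaced for your own construction: since $(U^{\pm})^m$ is $C^1$ across its front, $U^{\pm}$ are legitimate weak super-/subsolutions on the whole fixed cylindrical annulus, so the ordinary comparison principle in cylinders suffices there; the non-cylindrical machinery of Theorem~\ref{comparison} is needed by the paper for other configurations, not here. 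Third, your conclusion $u\sim C(\rho-\zeta_*)_+^{1/(1-\beta)}t^{1/(1-\beta)}$ agrees with \eqref{explicitsol}; the exponent $1/(1-\beta)$ on $(\rho-\zeta_*)_+$ is missing from \eqref{m+beta=2} as printed, so your $h(\rho)$ is the correct one.
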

\begin{thm}\label{theorem4}
Let $b>0,0<\beta<1, \alpha>\frac{2}{m-\beta}$. Then interface initially shrinks and
\begin{equation}\label{shrinkinginterface}
\eta_-(x)\sim \Big(\frac{R-|x|}{l_*}\Big)^{\alpha(1-\beta)}~\mathrm{as}\  \ |x|\rightarrow R-,
\end{equation}
where $l_*=C^{-{\frac{1}{\alpha}}}(b(1-\beta))^{\frac{1}{\alpha(1-\beta)}}$. For $\forall l>l_*$ we have
\begin{equation}\label{shrinkingsolution}
u \ \Big |_{|x|=R-lt^{\frac{1}{\alpha(1-\beta}}}\sim \{C^{1-\beta}l^{\alpha(1-\beta)}-b(1-\beta)\}^{\frac{1}{1-\beta}}t^{\frac{1}{1-\beta}}\qquad~\mathtt{as}\ \  t\downarrow 0.
\end{equation}
\end{thm}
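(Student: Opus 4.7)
The plan is to sandwich $u$ between two perturbations of the pure--absorption ODE $v_t=-bv^\beta$ with datum $u_0$. The condition $\alpha(m-\beta)>2$ says precisely that, near $\partial B_R$, the diffusion term $\Delta u_0^m=O((R-|x|)^{\alpha m-2})$ is of higher order than $b u_0^\beta=O((R-|x|)^{\alpha\beta})$, so on the heuristic level $u$ obeys the pointwise ODE, whose explicit solution
\[
U(x,t)=\bigl[u_0^{1-\beta}(x)-b(1-\beta)t\bigr]_+^{1/(1-\beta)}
\]
vanishes at $t=u_0^{1-\beta}(x)/[b(1-\beta)]\sim((R-|x|)/l_*)^{\alpha(1-\beta)}$, and when evaluated along $|x|=R-l\,t^{1/[\alpha(1-\beta)]}$ reproduces the right--hand side of \eqref{shrinkingsolution}. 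The work is to upgrade this into two--sided estimates.

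For the upper bound on $\eta_-$, I would construct, in a thin parabolic shell $R-r_0<|x|<R$, $0\le t\le\tau_0$, a supersolution of the form
\[
\bar u(x,t)=\Bigl[(1+\delta)u_0^{1-\beta}(x)-(1-\delta)b(1-\beta)t\Bigr]_+^{1/(1-\beta)},\qquad \delta>0\ \text{small}.
\]
Since $\bar u$ solves the absorption ODE by construction, the inequality $L\bar u\ge0$ reduces to bounding $\Delta\bar u^m$ by a small multiple of $b\bar u^\beta$ on the positivity set; using \eqref{CauchyProblem3}, this remainder is of order $(R-|x|)^{\alpha(m-\beta)-2}\bar u^\beta$, which is $o(\bar u^\beta)$ once $r_0,\tau_0$ are small enough thanks to $\alpha(m-\beta)>2$. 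The $\delta$--margin ensures $\bar u(\cdot,0)\ge u_0$, and positivity of $u$ on $|x|=R-r_0$ (after shrinking $\tau_0$) gives $\bar u\ge u$ on the lateral boundary. Comparison in the shell yields $u\le\bar u$, and letting $\delta\downarrow0$ gives $\eta_-(x)\le ((R-|x|)/l_*)^{\alpha(1-\beta)}(1+o(1))$.

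For the lower bound and the local asymptotic, I would build, for each $l>l_*$ and each small $\delta>0$, a subsolution
\[
\underline u(x,t)=\Bigl[(1-\delta)C^{1-\beta}\bigl[R-|x|-l\,t^{1/[\alpha(1-\beta)]}\bigr]_+^{\alpha(1-\beta)}-(1+\delta)b(1-\beta)t\Bigr]_+^{1/(1-\beta)}.
\]
At $t=0$ this lies below $u_0$ by \eqref{CauchyProblem3}, and it is supported in a thin moving strip abutting $\partial B_R$, so the lateral boundary control is automatic. The subsolution inequality again reduces, via the ODE plus a Laplacian remainder, to a bound that uses $\alpha(m-\beta)>2$ to render $\Delta\underline u^m$ negligible. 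Comparison then gives $u\ge\underline u$; evaluating along $|x|=R-\rho\,t^{1/[\alpha(1-\beta)]}$ with $\rho>l$ reproduces the right--hand side of \eqref{shrinkingsolution} up to a factor $1-O(\delta)$, and letting $l\downarrow l_*$ and $\delta\downarrow0$ yields the matching lower bound $\eta_-(x)\ge((R-|x|)/l_*)^{\alpha(1-\beta)}(1-o(1))$. The exact asymptotic \eqref{shrinkingsolution} is then pinned down using the rescaling lemma of Section~\ref{sec:prelim}, which identifies the ODE profile as the unique leading--order behavior along such slanted curves.

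The main obstacle is the uniform verification of $L\bar u\ge0$ and $L\underline u\le0$ up to the moving fronts, where $\bar u^m$ and $\underline u^m$ are only $C^1$ and the Laplacian remainders exhibit competing singularities: $u_0^{-\beta}\nabla u_0$ factors near $|x|=R$ together with corner--type singularities at the moving front of $\underline u$. The strict inequality $\alpha(m-\beta)>2$ is exactly what absorbs both of these into the $\delta$--margin, but quantifying this requires a careful splitting of the positivity set into a region near $\partial B_R$ and a region near the moving front, with separate estimates in each. The second technical ingredient is the application of the comparison theorem on non--cylindrical domains whose lateral boundary is a characteristic free--boundary--type manifold, for which the paper develops its tailored tools.
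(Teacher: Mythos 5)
Your heuristic (absorption dominates diffusion when $\alpha(m-\beta)>2$) and your target profiles are the right ones, and your supersolution is essentially the barrier the paper uses --- but only in the subcase $m+\beta\geq 2$. The genuine gap is at the \emph{moving front} of your barrier $\bar u$, where your error analysis breaks down. Writing $F=\bar u^{1-\beta}$, one has
\[
\Delta\bar u^{m}=\tfrac{m}{1-\beta}\bigl(\tfrac{m}{1-\beta}-1\bigr)F^{\frac{m}{1-\beta}-2}|\nabla F|^{2}+\tfrac{m}{1-\beta}F^{\frac{m}{1-\beta}-1}\Delta F ,
\]
and near $\{F=0\}$ (where $|\nabla F|$ does not vanish) the first term is positive and must be dominated by $\delta b\bar u^{\beta}=\delta bF^{\beta/(1-\beta)}$. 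That requires $\frac{m}{1-\beta}-2\geq\frac{\beta}{1-\beta}$, i.e.\ $m+\beta\geq 2$; when $m+\beta<2$ the term blows up relative to $\bar u^\beta$ and $L\bar u\geq 0$ fails. Your estimate ``remainder $=O((R-|x|)^{\alpha(m-\beta)-2}\bar u^{\beta})$'' is valid only in the bulk, where the small parameter is $R-|x|$; at the front the small parameter is $F$, and $\alpha(m-\beta)>2$ does not help there. This is exactly why the paper splits the upper bound into $m+\beta\geq 2$ (ODE-type barrier, matching yours) and $1\leq m+\beta<2$, where it imports from \cite{Abdulla1} a different, travelling-wave-type majorant with front exponent $\alpha$ (see \eqref{compshrinkingcase-2}); your single barrier cannot cover that regime. (A minor additional point: $\bar u$ is built from $u_0^{1-\beta}$ with $u_0$ merely continuous, so $\Delta\bar u^m$ need not exist; one must first replace $u_0$ by the power-law majorants $(C\pm\epsilon)(R-|x|)_+^{\alpha}$ as in \eqref{local1}--\eqref{local3}, and the paper further reduces to a one-dimensional half-space Dirichlet problem before invoking the 1D barriers.)

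The lower-bound half also has a quantitative defect. Your subsolution carries a front displacement $l\,t^{1/[\alpha(1-\beta)]}$ with $l>l_*$ \emph{in addition to} the absorption term, so its own interface sits at roughly $(l+l_*)t^{1/[\alpha(1-\beta)]}$; letting $l\downarrow l_*$ only yields $\eta_-(x)\gtrsim\bigl((R-|x|)/(2l_*)\bigr)^{\alpha(1-\beta)}$, not the sharp constant in \eqref{shrinkinginterface}, and along $|x|=R-\rho t^{1/[\alpha(1-\beta)]}$ it produces the constant $C^{1-\beta}(\rho-l)^{\alpha(1-\beta)}-b(1-\beta)$ rather than $C^{1-\beta}\rho^{\alpha(1-\beta)}-b(1-\beta)$, so it does not recover \eqref{shrinkingsolution}. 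The paper does not use a subsolution at all for this half: both \eqref{shrinkingsolution} and the sharp lower bound \eqref{eta_-lower4} come from the rescaling argument of Lemma~\ref{lemma4} (blow-up under the reaction scaling $v^k(y,t)=kv(k^{-1/\alpha}y,k^{\beta-1}t)$, uniform bound by an explicit global supersolution, compactness, and identification of the limit with the explicit pure-absorption solution). You invoke ``the rescaling lemma'' only as a final pinning-down device; in fact it has to carry the entire lower bound, since your barrier cannot.
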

\begin{thm}\label{theorem5}
If\ $\beta\geq 1,$\ $\alpha\geq \frac{2}{m-1,}$\ then the interface initially remains stationary.
\end{thm}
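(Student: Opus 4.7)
\bigskip

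\noindent\textbf{Proof plan.} The plan is to decompose Theorem~\ref{theorem5} into two one-sided statements: (i) the support does not expand, i.e.\ $u(x,t)=0$ for $|x|>R$ and $t$ small; and (ii) the support does not shrink, i.e.\ $u(x,t)>0$ for $|x|<R$ and $t$ small. The condition $\alpha\geq 2/(m-1)$ is the classical Knerr waiting-time threshold, which will drive (i); while $\beta\geq 1$ makes the reaction/absorption term negligible compared to the linear rate near zero, which will drive (ii).

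For the non-expansion part I would construct an explicit Knerr-type upper barrier
\[
\bar u(x,t)=f(t)\bigl(R-|x|\bigr)^{\alpha_0}_{+},\qquad \alpha_0:=\tfrac{2}{m-1},
\]
on the region $\{R-\delta\leq|x|\}\cap\{0\leq t\leq T^*\}$ with $\delta,T^*$ small. The algebraic identity $\alpha_0 m-2=\alpha_0$ forces the leading term of $\Delta\bar u^m$ to carry the same spatial factor $(R-|x|)^{\alpha_0}$ as $\bar u_t$; the radial correction $(N-1)r^{-1}\partial_r\bar u^m$ supplies an extra vanishing factor $(R-|x|)$, and the reaction term $bf^{\beta}(R-|x|)^{\alpha_0\beta}$ carries the auxiliary factor $(R-|x|)^{\alpha_0(\beta-1)}$, which is bounded on $\{R-|x|\leq 1\}$ precisely because $\beta\geq 1$. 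Dividing the supersolution inequality through by $(R-|x|)^{\alpha_0}$ reduces it to a differential inequality of the form $f'(t)\geq C_1 f^m+C_2 f^{\beta}$ with bounded coefficients (the coefficient of $f^{\beta}$ absorbing either sign of $b$), which admits a positive solution on $[0,T^*]$ for any prescribed initial value $f(0)$. Choosing $f(0)$ large enough that $\bar u(\cdot,0)\geq u_0(\cdot)$ on the annulus $R-\delta\leq|x|\leq R$---possible since $\alpha\geq\alpha_0$ forces $(R-|x|)^\alpha\leq (R-|x|)^{\alpha_0}$ near $\partial B_R$---and $\bar u\geq u$ on the lateral sphere $|x|=R-\delta$, where $u$ is uniformly bounded on $[0,T^*]$, the comparison principle of Section~\ref{sec:prelim} yields $u\leq\bar u$. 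Since $\bar u\equiv 0$ for $|x|>R$, the support of $u(\cdot,t)$ cannot expand.

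For the non-shrinking part, let $M:=\|u\|_{L^\infty(\mathbb{R}^N\times[0,T])}$, finite for any $T>0$. When $b>0$, the pointwise bound $bu^\beta \leq bM^{\beta-1}u$ (which uses $\beta\geq 1$) shows that $u$ is a supersolution of the linear-reaction porous medium equation
\[
w_t-\Delta w^m+\lambda w=0,\qquad \lambda:=bM^{\beta-1},\qquad w(\cdot,0)=u_0.
\]
The substitution $w(x,t)=e^{-\lambda t}v(x,\tau)$ with time change $\tau(t):=\int_0^t e^{-(m-1)\lambda s}\,ds$ reduces this to the pure porous medium equation $v_\tau=\Delta v^m$, whose solutions have non-shrinking support; hence $w(\cdot,t)>0$ on $B_R$ for small $t$, and comparison yields $u\geq w>0$ on $B_R$. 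When $b\leq 0$, the inequality $u_t-\Delta u^m=-bu^\beta\geq 0$ shows that $u$ is already a supersolution of the pure porous medium equation with data $u_0$, and the same conclusion follows directly.

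The principal technical obstacle will be the verification of the upper barrier in step (i): one must ensure that the coefficient of $f^m$ in the reduced ODE stays strictly positive uniformly down to the interface, so that $f$ remains bounded on $[0,T^*]$ even in the presence of a source ($b<0$), and one must check the hypotheses of the comparison principle on the non-cylindrical domain adjacent to the characteristic manifold $\partial B_R\times\{t=0\}$. Both issues fit into the framework of comparison theorems for weak solutions in non-cylindrical domains with characteristic boundary announced in Section~\ref{proofs}. Combining (i) and (ii) gives $\operatorname{supp}u(\cdot,t)\equiv\overline{B_R}$ for all $t\in[0,\min(T^*,T^{**})]$, proving Theorem~\ref{theorem5}.
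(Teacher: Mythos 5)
Your plan is correct and proves the theorem as stated, but it takes a partially different route from the paper. The paper's proof consists of applying the comparison Theorem~\ref{comparison} to explicit two-sided barriers constructed separately in the four subcases (5a)--(5d) of Section~\ref{sec: details of the main results}: the upper barriers are of the form $(C+\epsilon)(R-|x|)_+^{\alpha}$ or $(C+\epsilon)(R-|x|)_+^{2/(m-1)}$ times an explicit time factor, so your Knerr-type supersolution with the single exponent $\alpha_0=2/(m-1)$ and an ODE-generated $f(t)$ is essentially the paper's upper bound in cases (5a) and (5d) and a cruder unified substitute for it in the others; the lower barriers are explicit subsolutions $(C-\epsilon)(R-|x|)_+^{\alpha}$ times a positive time factor. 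Your non-shrinking half is genuinely different: instead of exhibiting subsolutions case by case, you majorize the absorption by a linear term $\lambda u$ with $\lambda=bM^{\beta-1}$ (legitimate precisely because $\beta\geq 1$ and $u$ is bounded), remove it by the substitution $w=e^{-\lambda t}v(x,\tau)$ with the exponential time change, and invoke the retention property of the porous medium equation. This is shorter, avoids the case analysis on $\beta$ versus $m$ and on $\alpha$ versus $2/(m-\beta)$, and suffices for the qualitative statement of Theorem~\ref{theorem5}; what it gives up is the sharp two-sided local asymptotics \eqref{stationary1}--\eqref{stationary4"} of $u$ near the stationary interface, which are the real content of the paper's construction. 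The two technical points you flag are genuine but standard and are resolved inside the same framework the paper uses: the barrier is a weak supersolution across the characteristic surface $|x|=R$ because $\alpha_0 m=\alpha_0+2>1$ makes $\nabla \bar{u}^m$ vanish there (so $\Delta \bar{u}^m$ has no singular part), and the coefficient of $f^m$ in the reduced inequality is the constant $\alpha_0 m(\alpha_0 m-1)$ while the radial correction enters with a favorable sign, so $f$ exists and stays bounded on a short interval for either sign of $b$. I see no gap.
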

\section{Technical Details of the Main Results}\label{sec: details of the main results} 
In this section we outline some essential details of the main results described in Theorems~\ref{theorem1}-\ref{theorem5} of \cref{sec:mainresult}. \\
{\it Technical details of Theorem~\ref{theorem1}}: Precise values of the constant $\xi_*$ and the function $f$ are associated with the one-dimensional Cauchy Problem \cite{Abdulla1}
\begin{gather}
w_t=(w^m)_{yy},\qquad y\in \mathrm{R},\ 0<t<+\infty\label{1ddiffusion}\\
w(y,0)= C(y)_+^\alpha,\qquad  y\in \mathrm{R}\label{1ddiffusionic}
\end{gather}
There exists a unique solution of the problem \eqref{1ddiffusion},\eqref{1ddiffusionic}, which is of self-similar form
\begin{equation}\label{selfsimilarform}
w(y,t) = t^{\frac{\alpha}{2+\alpha(1-m)}}f(\xi),\ \mathrm{where} \  \xi = \frac{y}{t^{\frac{1}{\alpha(1-m)+2}}},
\end{equation}
and the shape function $f$ solves nonlinear ODE problem
\begin{equation}\label{selfsimilarODE}
\left\{
\begin{array}{ll}
\frac{d^2 f^m}{d\xi^2} + \frac{1}{2+\alpha(1-m)}\xi \frac{df}{d\xi}- \frac{\alpha}{2+\alpha(1-m)}f = 0,\quad \xi \in \mathbb{R}\\
f \sim C\xi^\alpha\qquad as~\xi \rightarrow +\infty,\ f(\xi) = o(|\xi|^\alpha)~~as\quad \xi \downarrow -\infty,
\end{array} \right.
\end{equation}
with finite interface $\xi_*=\xi_*(C,\alpha,m)<0$\ such that
\begin{equation}\label{xi*}
f(\xi)>0,\ \xi_*<\xi<+\infty;\ f(\xi)\equiv 0,\ \xi<\xi_*
\end{equation}
Through rescaling one can find dependence of $f$ and $\xi_*$ on $C$ \cite{Abdulla1}:
\begin{equation}\label{CP7}
f(\rho)=C^{\frac{2}{2-\alpha(m-1)}}f_1(C^{\frac{m-1}{2-\alpha(m-1)}}\rho)
\end{equation}
\begin{equation}\label{CP8}
f_1(\rho)=w_1(\rho,1),\ \xi_*^{'}=inf\{x:f_1(\rho)>0\}<0
\end{equation}
\begin{equation}\label{CP6}
\xi_*=C^{\frac{m-1}{2-\alpha(m-1)}}\xi_*^{'}
\end{equation}
where $w_1$ and $f_1$ are solutions of \eqref{1ddiffusion},\eqref{1ddiffusionic}, and \eqref{selfsimilarODE}, respectively, with the constant $C=1$; \ $\xi_*^{'}$\ is a negative number depending on\ $m~\mathrm{and}\ \alpha$\ only. \\
{\it Technical details of Theorem~\ref{theorem3}}: Precise values of the constant $\zeta_*$ and the function $h$ are associated with the one-dimensional Cauchy Problem
\begin{gather}
w_t=w^m_{yy}-bw^\beta,\qquad y\in \mathrm{R},\ 0<t<+\infty\label{1dreactiondiffusion}\\
w_0(y)= C(y)_+^{\frac{2}{m-\beta}},\qquad  y\in \mathrm{R}\label{1dreactiondiffusionic}
\end{gather}
There exists a unique solution of the problem \eqref{1dreactiondiffusion},\eqref{1dreactiondiffusionic}, which is of self-similar form
\begin{equation}\label{selfsimilarform1}
w(y,t) = t^{\frac{1}{1-\beta}}h(\zeta),\ \mathrm{where} \  \zeta = \frac{y}{t^{\frac{m-\beta}{2(1-\beta)}}},
\end{equation}
and the shape function $h$ solves nonlinear ODE problem
\begin{equation}\label{selfsimilarODE2}
\left\{
  \begin{array}{ll}
\frac{1}{1-\beta}h-\frac{m-\beta}{2(1-\beta)}\zeta h^{'}-(h^m)^{''}+bh^{\beta}=0,\qquad\zeta \in \mathbb{R}. \\
h(\zeta)\sim C\zeta^{\frac{2}{m-\beta}}\quad\text{as}~\zeta \uparrow +\infty,\ h(\zeta)= o(|\zeta|^{\frac{2}{m-\beta}})~\text{as}~\zeta \downarrow -\infty. \\
 \end{array}
 \right.
 \end{equation}
There exists a finite interface $\zeta_*$ such that $\zeta_*=\zeta_*(C,m,\beta,b)$\ such that $\zeta_*\gtrless 0$ \ according to as\ $C\lessgtr C_*$, and
\begin{equation}\label{zeta*}
h(\zeta)>0,\ \zeta_*<\zeta<+\infty;\ h(\zeta)\equiv 0, \ \zeta \leq \zeta_*
\end{equation}
In the special case $m+\beta=2$ as in Corollary~\ref{corollary}, the explicit solution of the problems \eqref{1dreactiondiffusion}-\eqref{1dreactiondiffusionic} and \eqref{selfsimilarODE2}
are 
\begin{equation}\label{explicitsol}
w(y,t)=C(y-\zeta_*t)_+^{\frac{1}{1-\beta}}, \ h(\zeta)=C(\zeta-\zeta_*)_+^{\frac{1}{1-\beta}}
\end{equation}
with $\zeta_*$ defined in \eqref{m+beta=2}.\\
{\it Technical details of Theorem~\ref{theorem5}}: There are four subcases.\\
(5a)\ If\ $\beta=1,\ \alpha=\frac{2}{m-1}$\ then $\forall \epsilon >0\ \exists R_\epsilon \in (0,R)$ and $\delta_\epsilon >0$ such that 
\begin{gather}
(C-\epsilon)(R-|x|)^{\frac{2}{m-1}}_+e^{-bt}
\leq u \leq \nonumber\\
(C+\epsilon)(R-|x|)^{\frac{2}{m-1}}_+ e^{-bt}\Big(1-(C/\bar{C})^{m-1}b^{-1}(1-e^{-b(m-1)t})\Big)^{\frac{1}{1-m}}
\label{stationary1}
\end{gather}
for\ $R_\epsilon\leq |x|<+\infty,\  0\leq t\leq \delta_\epsilon,$\ and
\begin{equation}\label{CP13}
    T=\left\{
                \begin{array}{ll}
                 +\infty,\qquad\qquad\qquad if\qquad b\geq (C/\bar{C})^{m-1}\\
                  \frac{\ln\big(1-b(C/\bar{C})^{m-1}\big)}{b(1-m)},\quad if\ -\infty<b<(C/\bar{C})^{m-1}, \\
                  \end{array}
              \right.
 \end{equation}
 \begin{equation}\label{CP14}
 \bar{C}=\Big[\frac{(m-1)^2}{2m(m+1)}\Big]^{\frac{1}{m-1}}.
 \end{equation}
 (5b)\ If\  $\beta=1,\ \alpha>\frac{2}{m-1},$ then $\forall \epsilon>0,\ \exists R_\epsilon>0,\ and\ \delta_\epsilon>0$\ such that 
\begin{gather}
(C-\epsilon)(R-|x|)^{\alpha}_+e^{-bt}
\leq u \leq (C+\epsilon)(R-|x|)^{\alpha}_+ e^{-bt}\times\nonumber\\
\Big(1-\epsilon(b(m-1))^{-1}(1-e^{-b(m-1)t})\Big)^{\frac{1}{1-m}}, \ |x|\geq R_\epsilon, \ 0\leq t \leq \delta_\epsilon.\label{stationary2}
\end{gather}
(5c)\ If\ $1<\beta<m,\ \alpha\geq \frac{2}{m-\beta},$\ then $\forall \epsilon>0,\ \exists R_\epsilon>0,\ and\ \delta_\epsilon>0$\ such that 
\begin{equation}\label{stationary3}
g_{-\epsilon}\leq u(x,t)\leq g_{\epsilon},\quad |x|\geq R_\epsilon,\quad 0\leq t\leq \delta_\epsilon,
\end{equation}
\begin{equation}
g_{\epsilon}(x,t)=\left\{
                \begin{array}{ll}
[(C+\epsilon)^{1-\beta}(R-|x|)^{\alpha(1-\beta)}+b(\beta-1) (1-d_{\epsilon})t]^{\frac{1}{1-\beta}},\ \ R_\epsilon\leq |x|\leq R\\
0,\qquad\qquad\qquad |x|\geq R,
                  \end{array}
              \right.
\end{equation}
\begin{equation}\label{CP15}
d_{\epsilon}=\left\{
                \begin{array}{ll}
                  \epsilon\ sign\ b,\qquad \qquad \qquad  if  \ \alpha>\frac{2}{m-\beta}\\
                  \Big(\big(C+\epsilon/\bar{C}\big)^{m-\beta}+\epsilon \Big)sign\ b\ \quad if\ \alpha=\frac{2}{m-\beta} . \\
                  \end{array}
              \right.
\end{equation}
(5d)\ Let either\ $1<\beta<m,\frac{2}{m-1}\leq \alpha< \frac{2}{m-\beta}$\ or\ $\beta\geq m.\  \alpha \geq \frac{2}{m-1}$.\ If\ $\alpha= \frac{2}{m-1}$, then for arbitrary small\ $\epsilon>0\ \exists R_\epsilon>0,\ and\ \delta_\epsilon>0$\ such that for  $|x|\geq R_\epsilon,\quad 0\leq t\leq \delta_\epsilon$ we have
\begin{equation}\label{stationary4}
(C-\epsilon)(R-|x|)^{\alpha}_+\big(1-\gamma_{-\epsilon} t\big)^{\frac{1}{1-m}}\leq u(x,t)\leq (C+\epsilon)(R-|x|)^{\frac{2}{m-1}}\big(1-\gamma_\epsilon t\big)^{\frac{1}{1-m}},
\end{equation}
where
\begin{equation}\label{CP16}
\gamma_\epsilon=\Big[\frac{2m(m+1)(C+\epsilon)^{m-1}}{m-1}\Big]+\epsilon
\end{equation}
If\ $\alpha >\frac{2}{m-1}$\ then for arbitrary small\ $\epsilon>0\ \exists R_\epsilon>0,\ and\ \delta_\epsilon>0$\ such that
\begin{equation}\label{stationary4"}
(C-\epsilon)(R-|x|)^{\alpha}_+\leq u(x,t)\leq  (C+\epsilon)(R-|x|)^{\alpha}_+\big(1-\epsilon t\big)^{\frac{1}{1-m}}, \ |x|\geq R_\epsilon, \ 0\leq t\leq \delta_\epsilon.
\end{equation}

\section{ Preliminary results.} \label{sec:prelim}
Solution of the Cauchy problem \eqref{CauchyProblem1},\eqref{CauchyProblem2} is understood in the following weak sense:
\begin{defn}\label{weaksolution}
The function $u(x,t)$ is said to be a solution (respectively, super- or subsolution) of the Cauchy Problem \eqref{CauchyProblem1},\eqref{CauchyProblem2}, if
\begin{itemize}
\item $u$ is nonnegative and continuous in $\mathbb{R}^N\times [0,T)$, locally H\"{o}lder continuous in $\mathbb{R}^N\times(0,T)$, satisfying \eqref{CauchyProblem2}
(respectively, satisfying \eqref{CauchyProblem2} with $=$ replaced by $\geq$ or $\leq$),
\item for any $t_0$, $t_1$ such that $0<t_0<t_1<T$ and for any bounded domain $\Omega\subset\mathbb{R}^N$ with smooth boundary $\partial\Omega$ the following integral 
identity holds:
\begin{gather}
\int\limits_{\Omega\times\{t=t_1\}}ufdx=\int\limits_{\Omega\times\{t=t_0\}}ufdx+\int\limits_{\Omega\times\{t_0<t<t_1\}}(uf_t+u^m \Delta f - bu^\beta f)dx dt\nonumber\\
-\int\limits_{\partial\Omega\times(t_1,t_2)}u^m \frac{\partial f}{\partial \nu} dx dt, \label{integralidentity}
\end{gather}
(respectively, \eqref{integralidentity} holds with $=$ replaced by $\geq$ or $\leq$), where $f\in C_{x,t}^{2,1}(\overline{\Omega})$ is an arbitrary function (respectively, nonnegative function)
that equals to zero on $\partial\Omega\times[t_0,t_1]$ and $\nu$ is the outward-directed normal vector to $\partial\Omega$.
\end{itemize}
\end{defn}
Prelude of the theory of second order nonlinear degenerate parabolic equations are the papers \cite{Barenblatt1952,zeldovich}, which revealed the property of finite speed of propagation  of weak solutions due to implicit degeneration of the PDE. Importance of the analysis of the interfaces are twofold. First, this indicates more relevance for the physical applications in comparison with linear diffusion with infinite speed of propagation property. Second, non-smoothness of the weak solutions are concentrated primarily along to interfaces, that is to say along the zero level set of the solution where uniform parabolicity is violated. Mathematical theory of the second order nonlinear degenerate parabolic PDEs begins with the work \cite{Oleinik1958}. Currently there is a well established theory of well-posedness of main boundary value problems, and local regularity properties of weak solutions \cite{Aronson1981,Aronson1982,Caffarelli1979,Dibe-Sv,dibenedetto1983continuity,alt1983quasilinear,Lacey1982,aronson1983initially,kersner1980degenerate,BCP,Caffarelli1990,Caffarelli1987,Vazquez1984,Kalashnikov1987,Samarskii1987,otto,ADS}. Without any ambition to present full survey of outstanding contributions by many mathematicians, we refer to \cite{Dibe-Sv,Vazquez2} which outline the modern well established theory and contain extensive list of references. General theory of boundary value problems in non-cylindrical domains with non-smooth boundary manifolds under minimal regularity assumptions on the boundaries is developed in \cite{abdulla2001dirichlet,abdulla2005well,abdulla2006reaction}. 
In particular general theory in non-cylindrical non-smooth domains was motivated by the problem about the evolution of interfaces. To present complete classification of the development of interfaces it is essential to apply general theory of boundary-value problems in non-cylindrical domains with boundary surfaces which has the same kind  of behaviour as the interface. In many cases this may be nonsmooth and characteristic. 

We now make precise the meaning of the solution to Dirichlet problem (DP) in general domains. Let $\Omega$ be an open subset of $\mathbb{R}^{N+1},N \geq 2$. Let the boundary $\partial \Omega$ of $\Omega$ consist of the closure of a domain $B\Omega$ lying on $t = 0$, a domain $D\Omega$ lying on $t = T \in  (0,\infty)$ and a (not necessarily connected) manifold $S\Omega$ lying in the strip $0 < t \leq T$. Assume that $\Omega(\tau):=\Omega\cap\{t=\tau\}\neq\emptyset$ for $t \in (0, T )$. 

The set $\mathcal{P}\Omega = \overline{B\Omega}\cup S\Omega$ is called a parabolic boundary of $\Omega$. The class of domains with described structure is denoted by $\mathcal{D}_{0,T}$ . Let $\Omega\in \mathcal{D}_{0,T}$ be given and let $ \psi$ be an arbitrary continuous non-negative function defined on $\mathcal{P}\Omega$. DP consists of finding a solution to equation\eqref{CauchyProblem1} in $\Omega\cup D\Omega$ satisfying the initial-boundary condition
\begin{equation}\label{bddfunction}
u=\psi ~on~\mathcal{P}\Omega 
\end{equation}
\begin{defn}[Weak Solution of the DP](\cite{abdulla2005well,abdulla2006reaction}) \label{def: weak soln} 
We say that a function~$u(x,t)$~is  a solution (resp., super- or subsolution) of DP \eqref{CauchyProblem1},\eqref{bddfunction} if
\begin{itemize}
\item  $u$ is nonnegative, bounded and continuous in $\overline{\Omega}$, and  locally H\"{o}lder continuous in $\Omega \cup D\Omega$ satisfying \eqref{bddfunction} (respectively satisfying \eqref{bddfunction} with $=$ replaced by $\geq$ or $\leq$)
\item  for any~$t_0,t_1$such that~$0<t_0<t_1<T$, and for any domain  $\overline{\Omega}_1\in \mathcal{D}_{t_0,t_1}$~such that
$\overline{\Omega}_1 \subset \Omega\cup D\Omega$ and $\partial B\Omega_1, \partial D\Omega_1, S\Omega_1$ being sufficiently smooth manifolds, the following
integral identity holds:
\begin{equation}\label{int/identity}
\int_{D\Omega_1}ufdx=\int_{B\Omega_1}ufdx+\int_{S\Omega_1}(uf_t+u^m\Delta f)dxdt-\int_{S\Omega_1}u^m\frac{\partial f}{\partial \nu}dxdt
\end{equation}
(respectively \eqref{int/identity} holds with $=$ replaced by $\geq$ or $\leq$, where $f \in C^{2,1}_{x,t}(\overline{\Omega}_1)$ is an
 arbitrary function (respectively non-negative function) that equals zero on $S\Omega_1$
and $\nu$ is the outward-directed normal vector to $\Omega_1(t)$ at $(x,t) \in S\Omega_1$.
\end{itemize}
\end{defn}
In \cite{abdulla2001dirichlet,abdulla2005well,abdulla2006reaction} existence, boundary regularity, uniqueness and comparison theorems for the DP are proved under minimal pointwise assumption on the local modulus of lower semicontinuity of the boundary manifold $S\Omega$ (see Assumption $\mathcal{A}$ and Assumption $\mathcal{M}$ in \cite{abdulla2001dirichlet,abdulla2005well,abdulla2006reaction}). In particular, the following comparison theorem will be of essential use in this paper:
\begin{thm}\label{comparison} (\cite{abdulla2005well,abdulla2006reaction}). Let $u$ be a solution of DP and let $g$ be a supersolution (respectively subsolution) of DP. Assume that the assumption Assumption $\mathcal{A}$ and Assumption $\mathcal{M}$ of \cite{abdulla2005well} are satisfied. Then $u \leq$ (respectively $\geq$) $g$ in $\Omega$.
\end{thm}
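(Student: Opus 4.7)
The plan is to prove the comparison theorem by an Aronson--Caffarelli--Peletier duality argument, adapted to the non-cylindrical setting developed in \cite{abdulla2005well,abdulla2006reaction}. I would exhaust $\Omega$ from inside by sub-domains $\Omega_1\in\mathcal{D}_{t_0,t_1}$ with smooth lateral boundary $S\Omega_1$ and subtract the weak integral identities satisfied on $\Omega_1$ by $u$ and by the supersolution $g$, tested against a common non-negative test function $f$. With the elementary factorizations
$$u^m - g^m = a(x,t)\,(u-g),\qquad u^\beta - g^\beta = \tilde b(x,t)\,(u-g),$$
where $a := m\int_0^1(\theta u+(1-\theta)g)^{m-1}\,d\theta\ge 0$ and $\tilde b$ is analogous, the comparison problem reduces to a linear but possibly degenerate parabolic inequality for $w:=u-g$, with $w\le 0$ on $\mathcal{P}\Omega_1$ by the shared datum $\psi$.

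The core step is to pick $f=f_\varepsilon\ge 0$ as the solution of the smoothed backward problem
$$\partial_t f_\varepsilon + (a_\varepsilon+\varepsilon)\,\Delta f_\varepsilon - b\,\tilde b_\varepsilon\, f_\varepsilon = 0\ \text{in}\ \Omega_1,\qquad f_\varepsilon|_{t=t_1}=\chi,\quad f_\varepsilon=0\ \text{on}\ S\Omega_1,$$
for arbitrary $\chi\in C_c^\infty(D\Omega_1)$, $\chi\ge 0$, where $a_\varepsilon,\tilde b_\varepsilon$ are smooth mollifications of $a,\tilde b$. Plugging $f_\varepsilon$ into the subtracted identity, the interior contribution collapses to $\int_{D\Omega_1}w\,\chi\,dx$ modulo an error that vanishes as $\varepsilon\downarrow 0$ via uniform $L^\infty$ bounds on $f_\varepsilon$ and $L^2$ bounds on $\nabla f_\varepsilon$ obtained from standard parabolic estimates. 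The remaining pieces have favorable signs: $\int_{B\Omega_1} w\,f_\varepsilon\,dx\le 0$ since $w\le 0$ there and $f_\varepsilon\ge 0$, while $-\int_{S\Omega_1}(u^m-g^m)\,\partial_\nu f_\varepsilon\,d\sigma\,dt\le 0$ because $u^m-g^m\le 0$ on $S\Omega_1$ and $\partial_\nu f_\varepsilon\le 0$ by a Hopf-type consideration. Letting $\chi$ approximate the indicator of $\{w>0\}$ forces $\int_{D\Omega_1} w_+\,dx = 0$, and continuity of $u,g$ together with the arbitrariness of $t_0<t_1<T$ yields $u\le g$ throughout $\Omega$.

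The hard part will be simultaneously handling the degeneracy of $a$ at the common zero set of $u,g$ and the non-smoothness or characteristic nature of $S\Omega$. The first forces the $\varepsilon$-regularization above and demands uniform estimates on $f_\varepsilon$ that survive $\varepsilon\downarrow 0$ (this is where one uses e.g.\ Aronson--Serrin or Krylov--Safonov type bounds independent of the ellipticity constant); the second forces an exhaustion $\Omega_1\nearrow\Omega$ whose lateral boundary manifolds converge to $S\Omega$ in a controlled way. These two limits must be executed together, and it is precisely for this purpose that the pointwise boundary regularity hypotheses (Assumption $\mathcal{A}$ and Assumption $\mathcal{M}$ of \cite{abdulla2005well}) are imposed: they supply a modulus for the local lower semicontinuity of $S\Omega$ that underlies a barrier construction guaranteeing the boundary integral remains controlled as the approximating manifolds converge to $S\Omega$.
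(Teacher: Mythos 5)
The paper does not prove Theorem~\ref{comparison} at all: it is imported verbatim from \cite{abdulla2005well,abdulla2006reaction}, so there is no internal proof to compare against. That said, your sketch is essentially the argument used in those references --- the Oleinik--Kalashnikov/Aronson-style duality method: subtract the integral identities, factor $u^m-g^m=a\,(u-g)$, and test against nonnegative solutions $f_\varepsilon$ of a regularized backward adjoint problem on an exhausting family of smooth domains $\Omega_1$, with Assumptions $\mathcal{A}$ and $\mathcal{M}$ supplying the boundary control needed to pass to the limit. So the strategy is the right one.

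One step, however, is stated in a way that would fail as written. You claim the lateral term $-\int_{S\Omega_1}(u^m-g^m)\,\partial_\nu f_\varepsilon$ is nonpositive ``because $u^m-g^m\le 0$ on $S\Omega_1$.'' But $S\Omega_1$ is an \emph{interior} approximating manifold, and $u\le g$ is known only on $\mathcal{P}\Omega$ (where both are compared to the datum $\psi$); on $S\Omega_1$ the inequality $u\le g$ is precisely what is being proved, so no sign information is available there a priori. The correct handling is quantitative: one shows $|u^m-g^m|$ is small on $S\Omega_1$ when $S\Omega_1$ is close to $S\Omega$ (continuity of $u,g$ on $\overline\Omega$ plus $u\le g$ on $S\Omega$), and pairs this with a uniform bound on $\int_{S\Omega_1}|\partial_\nu f_\varepsilon|$. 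That bound, obtained from the energy identity for the backward problem, degenerates like $\varepsilon^{-1/2}$, which is exactly why the limits $\varepsilon\downarrow 0$ and $\Omega_1\nearrow\Omega$ must be coupled and why the moduli in Assumptions $\mathcal{A}$ and $\mathcal{M}$ are needed. Your final paragraph shows you are aware of this coupling, but the middle of the argument should be rewritten so that the boundary term is estimated rather than discarded by a sign that is not available. A second, smaller caveat: the factorization $u^\beta-g^\beta=\tilde b\,(u-g)$ produces an unbounded coefficient near the common zero set when $\beta<1$, so the zeroth-order term must be treated with the sign of $b$ in mind (this is why the paper assumes $\beta\ge 1$ when $b<0$); your sketch should note where this restriction enters.
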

The initial development of interfaces and local structure of solutions near the interfaces is very well understood in the one dimensional case. Full classification of evolution of interfaces 
and local behavior of solutions near the interfaces for the problem \eqref{CauchyProblem1}-\eqref{CauchyProblem3} with space dimension $N=1$ was presented in \cite{Abdulla1}
for slow diffusion case ($m>1$), and in \cite{abdulla2002evolution} for the fast diffusion case ($m=1$). The results and methods of \cite{Abdulla1,abdulla2002evolution} are extended to solve interface problem for $p$-Laplacian type reaction-diffusion equations in \cite{AbdullaJeli1,AbdullaJeli2}, and for the reaction-diffusion equations with double degenerate diffusion in \cite{AbdullaPrinkey1}. The method of the proof developed in \cite{Abdulla1,abdulla2002evolution} is based on rescaling and application of the one-dimensional theory of reaction-diffusion equations in general non-cylindrical domains with non-smooth boundary curves developed in \cite{abdulla2000reaction,abdulla2000reaction1}. Sharp asymptotic estimates for the interfaces and local solutions of the Dirichlet problem for the equation \eqref{CauchyProblem1} in bounded cylindrical domains domains was proved in \cite{abdulla1999local}. 
Estimation for the interfaces via energy methods is pursued in \cite{ADS}.

In the following three lemmas we establish asymptotic properties of the solution to the Cauchy problem \eqref{CauchyProblem1}-\eqref{CauchyProblem3} based on the scaling laws corresponding to the PDE \eqref{CauchyProblem1}.
\begin{lem}\label{lemma1}
Let\ $u$\ solves\ CP \eqref{CauchyProblem1}-\eqref{CauchyProblem3} \ with\ $b=0,\ $ and one of the following conditions is satisfied.\\
(i)\ $b=0,\ 0<\alpha<\frac{2}{m-1}$\\
(ii)\ $b>0,\ 0<\beta<1,\ 0<\alpha<\frac{2}{m-\beta}$\\
(iii)\ $b\neq 0,\ \beta\geq1, 0<\alpha<\frac{2}{m-1}$\\
Then~u(x,t)~satisfies \eqref{localsolution1}.
\end{lem}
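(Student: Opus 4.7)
\textbf{Proof plan for Lemma~\ref{lemma1}.}

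The plan is to perform a parabolic rescaling adapted to the scale $t^{k}$ where $k = 1/(2+\alpha(1-m))$, and then identify the limit of the rescaled solutions with the one–dimensional self–similar solution $w(y,s)=s^{\alpha k}f(y/s^{k})$ of \eqref{1ddiffusion}--\eqref{1ddiffusionic} whose existence and uniqueness, together with the shape function $f$ from \eqref{selfsimilarODE}, is recalled in Section~\ref{sec: details of the main results}. Fix $\rho\in(\xi_{*},0)$ and for $\tau>0$ small define the rescaled function
\[
v_{\tau}(y,s) := \tau^{-\alpha k}\, u\bigl(R-\tau^{k} y,\;\tau s\bigr), \qquad y\in\mathbb{R},\ s>0,
\]
where by the radial symmetry of $u_{0}$ we view $u$ as a function of $r=|x|$ and replace the radial coordinate by $R-\tau^{k}y$. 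The claim \eqref{localsolution1} is equivalent to $v_{\tau}(\rho,1)\to f(\rho)$ as $\tau\downarrow 0$.

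The first step is to identify the limiting PDE for $v_{\tau}$. A direct computation, using $u_{t}=(u^{m})_{rr}+\tfrac{N-1}{r}(u^{m})_{r}-bu^{\beta}$ near $r=R$, shows that $v_{\tau}$ is a weak solution of
\[
\partial_{s}v_{\tau} \;=\; \partial_{yy}v_{\tau}^{m} \;+\; \tau^{k}\,\frac{N-1}{R-\tau^{k}y}\,\partial_{y} v_{\tau}^{m} \;-\; b\,\tau^{\,1-\alpha k(1-\beta)}\,v_{\tau}^{\beta},
\]
the choice of $k$ being exactly what makes the leading diffusion term nontrivial. The hypotheses (i)--(iii) all reduce to $\alpha<2/(m-\min\{1,\beta\})$, which is precisely the algebraic statement $1-\alpha k(1-\beta)>0$ (trivial when $\beta\ge 1$, nontrivial when $\beta<1$); thus the reaction term carries a positive power of $\tau$. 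The geometric curvature correction carries a factor $\tau^{k}$. Both perturbations therefore tend to zero as $\tau\downarrow 0$, and the limiting equation is the pure one–dimensional porous medium equation $w_{s}=(w^{m})_{yy}$. The initial data rescale as
\[
v_{\tau}(y,0)=\tau^{-\alpha k} u_{0}(R-\tau^{k}y) \;\longrightarrow\; C\,y_{+}^{\alpha},
\]
uniformly on compacts, by \eqref{CauchyProblem3}.

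The second step is a stability/compactness argument. By the DiBenedetto–Friedman interior regularity theory for degenerate parabolic PDEs cited in Section~\ref{sec:prelim}, the family $\{v_{\tau}\}$ is locally uniformly bounded and locally uniformly H\"older continuous on $\mathbb{R}\times(0,\infty)$; hence it is precompact in $C_{loc}$. Any subsequential limit $v_{*}$ is a nonnegative weak solution of $w_{s}=(w^{m})_{yy}$ on $\mathbb{R}\times(0,\infty)$ with $v_{*}(\cdot,0)=C y_{+}^{\alpha}$. By the uniqueness part of the one–dimensional theory of \cite{Abdulla1} used in Section~\ref{sec: details of the main results}, $v_{*}\equiv w$, the self-similar solution \eqref{selfsimilarform}. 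Since the limit is unique, the full family converges: $v_{\tau}\to w$ in $C_{loc}(\mathbb{R}\times(0,\infty))$. Evaluating at $(\rho,1)$ gives $v_{\tau}(\rho,1)\to w(\rho,1)=f(\rho)$, which, after renaming $\tau=t$, is precisely \eqref{localsolution1}.

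The main obstacle I foresee is the justification of compactness uniformly up to the moving interface at $y=\xi_{*}s^{k}$. For $\rho>\xi_{*}$ the point $(\rho,1)$ lies in the positivity set of $w$, so interior regularity suffices provided one can first localize away from $\xi_{*}$; however one must guard against the possibility that $v_{\tau}$ vanishes identically there for small $\tau$ (i.e.\ that the actual multidimensional interface lags behind the predicted one). The way around this is to build, via the comparison Theorem~\ref{comparison} in the non-cylindrical domains of \cite{abdulla2005well,abdulla2006reaction}, an explicit subsolution obtained by slightly perturbing the one–dimensional self-similar solution (shifting $\xi_{*}$ inward and reducing $C$ by $\varepsilon$) and a matching supersolution (shifting outward and enlarging $C$), so that $v_{\tau}$ is sandwiched between two nondegenerate profiles on a neighborhood of $(\rho,1)$. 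This both secures the compactness and rules out degeneracy of the limit, completing the proof.
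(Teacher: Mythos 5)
Your plan is, in outline, the same as the paper's: rescale by the diffusive similarity exponents $k=1/(2+\alpha(1-m))$, observe that under (i)--(iii) the reaction term (and, in your radial formulation, the curvature term) acquires a positive power of the scaling parameter, pass to the limit, and identify the limit with the one--dimensional self-similar solution \eqref{selfsimilarform}--\eqref{selfsimilarODE} by uniqueness; the sandwich with $C\pm\epsilon$ handles initial data satisfying only \eqref{CauchyProblem3}. The paper phrases the rescaling in translated Cartesian coordinates in $\mathbb{R}^N$ (so the limit data is the planar profile $C(y_1)_+^\alpha$ and no curvature term ever appears), while you reduce to the radial ODE in $r$; these are equivalent, and your exponent bookkeeping is correct.

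There is, however, one genuine gap, in case (iii) with $b<0$. You invoke DiBenedetto--Friedman regularity to get that $\{v_\tau\}$ is ``locally uniformly bounded and locally uniformly H\"older continuous,'' but interior regularity theory yields equicontinuity \emph{from} a uniform $L^\infty$ bound; it does not produce the bound. When $b\geq 0$ the bound is immediate by comparison with the $b=0$ problem, but when $b<0$ the zero-order term is a source and the rescaled family could a priori grow; moreover $u_0$ need not be globally controlled by $C(R-|x|)_+^\alpha$, so one must first localize. The paper closes exactly this gap by (a) restricting to $\{|x|<2R\}$ with zero lateral data via finite speed of propagation, and (b) exhibiting the explicit barrier $g=(C+1)(1+|y|^2)^{\alpha/2}(1-\mu t)^{1/(1-m)}$, which is a supersolution of the rescaled problem for large $k$ because the source contribution is $O\big(k^{(\alpha(m-\beta)-2)/\alpha}\big)$. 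Your proposed sub/supersolutions obtained by perturbing the one--dimensional PME profile are aimed at nondegeneracy near the interface, not at this global bound, and a shifted PME profile is not by itself a supersolution of \eqref{CauchyProblem1} with $b<0$ unless it is inflated by a time factor of the type $(1-\mu t)^{1/(1-m)}$. Supplying such a barrier (or an equivalent uniform bound) is the missing step; the rest of your argument then goes through as in the paper.
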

\begin{proof}[Proof] (i)\  First consider the global case \eqref{CauchyProblem4}. Change the variable $y=x+\bar{x}$ with $\bar{x}=(R,0,....,0)$.
Function $v(y,t)=u(y-\bar{x},t)$ solves the problem
\begin{gather}
v_t(y,t)-\Delta v^{m}(y,t)=0,~y\in\mathbb{R}^N,~ t>0\label{transformedequation}\\
v(y,0)=C(R-|y-\bar{x}|)_+^{\alpha},\ y\in \mathbb{R}^N\label{globalic}
\end{gather}
Since nonlinear diffusion equaton is invariant under the scaling
\[ y \rightarrow k^{-\frac{1}{\alpha}}y, \ \ t\rightarrow k^\frac{\alpha(m-1)-2}{\alpha}t \]
rescaled function 
\[v_k(y,t)=kv(k^{-\frac{1}{\alpha}}y,k^\frac{\alpha(m-1)-2}{\alpha}t)\]
solves the problem
\[\left\{
  \begin{array}{ll}
w_t(y,t)-\Delta w^m(y,t)=0\quad y\in \mathbb{R}^N~t>0\\
w(y,0)=C(k^{\frac{1}{\alpha}}R-|y-k^{\frac{1}{\alpha}}\bar{x}|)^{\alpha}_+\quad y\in \mathbb{R}^N.\\
 \end{array}
 \right.\]
 Since
  \begin{equation}\label{limitu0}
    \lim_{k\rightarrow \infty}C\{k^{\frac{1}{\alpha}}R-|y-k^{\frac{1}{\alpha}}\bar{x}|\}^{\alpha}_+=C(y_1)^{\alpha}_+,
    \end{equation}
the limit function solves the CP \eqref{CauchyProblem1},\eqref{CauchyProblem2} with $b=0$ and $u_0(x)=C(x_1)^\alpha_+$.    
Due to uniqueness of the solution to the CP (\cite{BCP}), the latter coincides with the solution of the 1D CP  \eqref{1ddiffusion},\eqref{1ddiffusionic}, which is of self-similar form \eqref{selfsimilarform} with the shape function $f(\xi)$ solving nonlinear ODE problem \eqref{selfsimilarODE} and having finite interface $\xi_*<0$. Therefore, we have
\[\lim_{k \rightarrow \infty}k v(k^{-\frac{1}{\alpha}}y,k^\frac{\alpha(m-1)-2}{\alpha}t)=t^{\frac{\alpha}{2+\alpha(1-m)}}f\Big(y_1t^{-\frac{1}{2+\alpha(1-m)}}\Big).\]
By choosing $y_2=.....=y_n=0$, $k^{\frac{\alpha(m-1)-2}{\alpha}}t=\tau$, $y_1=\rho t^{\frac{1}{2+\alpha(1-m)}}$, $\rho>\xi_*$, we have
\begin{equation}\label{pre6}
u(x,t) \arrowvert _{\Gamma_{\rho}}\sim f(\rho)  t^{\frac{\alpha}{2+\alpha(1-m)}}~as\ t\downarrow 0,
\end{equation}
where,
\[ \Gamma_{\rho}=\{(x,t):x_1=-R+\rho t^{\frac{1}{2+\alpha(1-m)}},\ x_2=.........=x_n=0,\ t\geq 0\} \]
Since the initial condition is radially symmetric, the solution of the CP is radially symmetric for any fixed $t>0$, and therefore, from \eqref{pre6}, \eqref{localsolution1} follows. 
Equivalently, for all $\rho$ with $\xi_*<\rho<0$ we have 
\begin{equation}\label{equiv 6}
u \ \Big |_{ t=\eta(x)=\Big (\frac{R-|x|}{\rho}\Big )^{2+\alpha(1-m)}}\sim f(\rho) t^{\frac{\alpha}{2+\alpha(1-m)}}~\quad\text{as} \ |x|\rightarrow R+.
\end{equation}
If $u_0(x)$ satisfies \eqref{CauchyProblem3}, 
then for arbitrary $\epsilon>0$ $\exists 0<R_\epsilon<R$ such that
\begin{equation}\label{local1}
 \Big(C-\epsilon/2\Big)(R-|x|)_+^{\alpha}\leq u_0(x) \leq \Big(C+\epsilon/2\Big)(R-|x|)_+^{\alpha}, \ |x|\geq R_\epsilon
\end{equation}
Let $u_{\pm\epsilon}$ be a solution of the CP \eqref{CauchyProblem1},\eqref{CauchyProblem2} with initial function
$(C\pm\epsilon )(R-|x|)_+^{\alpha}$.
Since the solution of CP \eqref{CauchyProblem1},\eqref{CauchyProblem2} is continuous, there exists a~$\delta_\epsilon>0$~such that 
\begin{equation}\label{local2}
u_{-\epsilon}(x,t)\leq u(x,t)\leq u_{+\epsilon}(x,t),\quad |x|=R_\epsilon, 0\leq t\leq \delta_\epsilon
\end{equation}
From (\ref{local1}),(\ref{local2}) and a comparison Theorem~\ref{comparison} applied in $\{|x|>R_\epsilon, 0<t\leq\delta_\epsilon\}$ we have
\begin{equation}\label{local3}
u_{-\epsilon}\leq u \leq u_\epsilon \quad \mathrm{for}\quad |x|>R_\epsilon, 0<t\leq \delta_\epsilon
\end{equation}
As we have already proved for all $\rho$ such that $\xi_*(C\pm\epsilon)<\rho<0$, $u_{\pm\epsilon}$ satisfies \eqref{localsolution1} with $f$ replaced with solution $f_{C\pm\epsilon}$ 
of the problem \eqref{selfsimilarODE} with $C$ replaced with $C\pm\epsilon$. Due to continuous dependence of $f$ and $\xi_*$ on $C$, from \eqref{local3}, \eqref{localsolution1} for $u$ easily follows.\\ 
(ii) \& (iii)\ Assume that the condition of the case (ii) or (iii) with $b>0$ is fulfilled. As before, from \eqref{CauchyProblem3}, \eqref{local1}-\eqref{local3} follows.
Changing the variable $y=x+\bar{x}$ with $\bar{x}=(R,0,....,0)$, the function
$v_{\pm\epsilon}(y,t)=u_{\pm\epsilon}(y-\bar{x},t)$ solves the problem
\begin{gather}
v_t-\Delta v^{m}+bv^{\beta}=0,~y\in\mathbb{R}^N,~ t>0\label{changedproblem1}\\
v(y,0)= (C\pm\epsilon)(R-|y-\bar{x}|)_+^{\alpha}, \ y\in \mathbb{R}^N\label{changedproblem2}
\end{gather} 
Rescaled function 
\begin{equation}\label{rescaling}
v_{\pm\epsilon}^k(y,t)=kv_{\pm\epsilon}(k^{-\frac{1}{\alpha}}y,k^\frac{\alpha(m-1)-2}{\alpha}t)
\end{equation}
solves the problem
 \[\left\{
  \begin{array}{ll}
w_t-\Delta w^m+k^{\frac{\alpha(m-\beta)-2}{\alpha}}bw^{\beta}=0,\ y\in \mathbb{R}^N,\ t>0. \\
w(y,0)=(C\pm\epsilon)(k^{\frac{1}{\alpha}}R-|y-k^{\frac{1}{\alpha}}\bar{x}|)^{\alpha}_+, \ y\in \mathbb{R}^N \\
 \end{array}
 \right.\]
 From the comparison Theorem~\ref{comparison} and \eqref{limitu0} it follows that the sequence $\{v_{\pm\epsilon}^k\}$ is uniformly bounded
 by the solution of the CP \eqref{CauchyProblem1},\eqref{CauchyProblem2} with $b=0$ and $u_0(x)=C(x_1)^\alpha_+$. Since $\alpha(m-\beta)-2<0$,
 it easily follows that the sequence $\{v_{\pm\epsilon}^k\}$ converges to the solution of the CP \eqref{CauchyProblem1},\eqref{CauchyProblem2} with $b=0$ and $u_0(x)=C(x_1)^\alpha_+$. The rest of the proof coincides with the one given in case (i) above.\\
Finally, consider the case (iii) with $b<0$. Let $u_{\pm\epsilon}$ be a solution of the problem
\begin{gather}
u_t-\Delta u^{m}+bu^\beta=0,~|x|<2R,~ 0<t<\delta\label{localizedeq1}\\
u(x,0)=(C\pm\epsilon)(R-|x|)_+^{\alpha},\ |x|\leq 2R\label{localizedeq2}\\
u|_{|x|=2R}=0,\ 0\leq t \leq \delta.\label{localizedeq3}
\end{gather}
Due to finite speed of propagation property, the solution of the CP \eqref{CauchyProblem1},\eqref{CauchyProblem2} will vanish as $|x|=2R, 0\leq t \leq \delta$ for some $\delta>0$. 
Therefore, by comparison theorem we have \eqref{local2} for $R_\epsilon \leq |x| \leq 2R, 0\leq t \leq\delta_\epsilon$. Now, the function $v_{\pm\epsilon}(y,t)=u_{\mp\epsilon}(y-\bar{x},t)$ solves the problem
\begin{gather}
v_t-\Delta v^{m}+bv^{\beta}=0,~|y-\bar{x}|<2R,~ 0<t<\delta\label{transformedloc1}\\
v(y,0)= (C\pm\epsilon)(R-|y-\bar{x}|)_+^{\alpha}, \ |y-\bar{x}|<2R\label{transformedloc2}\\
v|_{|y-\bar{x}|=2R}=0,~0\leq t \leq \delta\label{transformedloc3}
\end{gather} 
and the function $w=v_{\pm\epsilon}^k$ rescaled as in \eqref{rescaling}, 
solves the problem
 \begin{gather}
L_kw\equiv w_t-\Delta w^m+k^{\frac{\alpha(m-\beta)-2}{\alpha}}bw^{\beta}=0,\ |y-k^{\frac{1}{\alpha}}\bar{x}|<2Rk^{\frac{1}{\alpha}},\ 0<t<k^{\frac{2-\alpha(m-1)}{\alpha}}\delta,\label{rescaledeq1}\\
w(y,0)=(C\pm\epsilon)(k^{\frac{1}{\alpha}}R-|y-k^{\frac{1}{\alpha}}\bar{x}|)^{\alpha}_+, \ |y-k^{\frac{1}{\alpha}}\bar{x}|<2Rk^{\frac{1}{\alpha}},\label{rescaledeq2}\\
w|_{|y-k^{\frac{1}{\alpha}}\bar{x}|=2Rk^{\frac{1}{\alpha}}}=0,\ 0\leq t \leq k^{\frac{2-\alpha(m-1)}{\alpha}}\delta. \label{rescaledeq3}
 \end{gather}
To prove the convergence of the sequence $\{v_{\pm\epsilon}^k\}$ we first prove the uniform boundedness. Consider a function
\[ g(x,t)=(C+1)(1+|y|^2)^{\frac{\alpha}{2}} (1-\mu t)^{\frac{1}{1-m}}, \ y\in\mathbb{R}^N, \ 0\leq t \leq t_0\coloneqq\mu^{-1}/2, \]
with
\begin{gather}
\mu=\bar{H}+1, \bar{H}=\max\limits_{y\in\mathbb{R}^N}H(y),\nonumber\\
 H(y)=\alpha m(m-1)(C+1)^{m-1} \max(1; N+2-\alpha m) \Big(1+|y|^2\Big)^{\frac{\alpha(m-1)-2}{2}}.\nonumber
 \end{gather}
We have
\begin{gather}
L_kg=(m-1)^{-1}(C+1)(1+|y|^2)^{\frac{\alpha}{2}}(1-\mu t)^{\frac{m}{1-m}} S,\  S\geq 1+R\nonumber\\
R=b(m-1)(C+1)^{\beta-1}k^{\frac{\alpha(m-\beta)-2}{\alpha}}(1+|y|^2)^{\frac{\alpha(\beta-1)}{2}}(1-\mu t)^{\frac{\beta-m}{1-m}},\nonumber
\end{gather}
and
\[ R=O\Big (k^{\frac{\alpha(m-1)-2}{\alpha}}\Big),\ \text{uniformly for} \ |y-k^{\frac{1}{\alpha}}\bar{x}|\leq 2Rk^{\frac{1}{\alpha}},\ 0\leq t\leq t_0, \ \text{as} \ k\to+\infty \]
We also have
\begin{equation}\label{comparisononparabolicb}
g(y,0)\geq v_{\pm\epsilon}^k(y,0),\ |y-k^{\frac{1}{\alpha}}\bar{x}|<2Rk^{\frac{1}{\alpha}}; \ g|_{|y-k^{\frac{1}{\alpha}}\bar{x}|=2Rk^{\frac{1}{\alpha}}}\geq 0.
\end{equation}
Therefore, for all sufficiently large $k$ $g$ is a supersolution of the problem \eqref{rescaledeq1}-\eqref{rescaledeq3}. From the Theorem \ref{comparison} it follows that
\begin{equation}\label{uniformbound}
 0\leq v_{\pm\epsilon}^k(y,t)\leq g(y,t),\ |y-k^{\frac{1}{\alpha}}\bar{x}|\leq 2Rk^{\frac{1}{\alpha}},\ 0\leq t \leq t_0. 
 \end{equation}
Hence, the sequence $\{ v_{\pm\epsilon}^k\}$ is uniformly bounded in a strip $\{0\leq t\leq t_0\}$. Standard regularity result for the nonlinear degenerate parabolic equations \cite{Dibe-Sv}
imply that the sequence is uniformly H\"{o}lder continuous on compact subsets of $\{0<t\leq t_0\}$. Arzela-Ascoli theorem and standard diagonalization argument imply that there is a pointwise convergent subsequence in $\{0<t\leq t_0\}$, with uniform convergence on compact subsets. Since, $\alpha(m-\beta)<2$ it easily follows that the limit function is a solution 
of the CP \eqref{CauchyProblem1},\eqref{CauchyProblem2} with $b=0$ and $u_0(x)=C(x_1)^\alpha_+$. Due to uniqueness of the latter, the whole sequence converges to its unique limit point, and the rest of the proof is completed as in previous cases.
\end{proof}
\begin{lem}\label{lemma3}
Let $b>0,\ 0<\beta<1, \alpha=\frac{2}{m-\beta}$. Then solution $u$ of the CP \eqref{CauchyProblem1}-\eqref{CauchyProblem3} satisfies \eqref{asympsolution2}.
\end{lem}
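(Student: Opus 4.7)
The plan is to adapt the rescaling argument used in the proof of Lemma~\ref{lemma1} to the critical exponent $\alpha=\tfrac{2}{m-\beta}$, exploiting the fact that at this value the reaction term $bu^{\beta}$ is preserved (rather than killed) by the natural scaling of the PDE.

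First, by \eqref{CauchyProblem3} there exists $R_{\epsilon}\in(0,R)$ so that the two-sided sandwich \eqref{local1} holds. Let $u_{\pm\epsilon}$ denote the solutions of \eqref{CauchyProblem1}--\eqref{CauchyProblem2} with initial data $(C\pm\epsilon)(R-|x|)_{+}^{\alpha}$. Continuity of $u$, together with Theorem~\ref{comparison} applied in the exterior domain $\{|x|>R_{\epsilon},\ 0<t\le\delta_{\epsilon}\}$, yields \eqref{local3} exactly as in Lemma~\ref{lemma1}. It therefore suffices to establish \eqref{asympsolution2} for $u_{\pm\epsilon}$ and let $\epsilon\downarrow 0$ using continuous dependence of the shape function $h=h_{C}$ and of $\zeta_{*}=\zeta_{*}(C,m,\beta,b)$ on $C$.

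Next, translate by $\bar{x}=(R,0,\ldots,0)$, setting $v_{\pm\epsilon}(y,t)=u_{\pm\epsilon}(y-\bar{x},t)$, and introduce the rescaling
\[
v_{\pm\epsilon}^{k}(y,t)=k\,v_{\pm\epsilon}\bigl(k^{-\tfrac{1}{\alpha}}y,\ k^{\tfrac{\alpha(m-1)-2}{\alpha}}t\bigr).
\]
A direct computation shows that $v_{\pm\epsilon}^{k}$ satisfies
\[
\partial_{t}w-\Delta w^{m}+b\,k^{\tfrac{2-\alpha(m-\beta)}{\alpha}}w^{\beta}=0.
\]
Because $\alpha=\tfrac{2}{m-\beta}$, the exponent $\tfrac{2-\alpha(m-\beta)}{\alpha}$ vanishes, so $v_{\pm\epsilon}^{k}$ solves the \emph{same} reaction--diffusion equation \eqref{CauchyProblem1} on its rescaled domain, with initial data $(C\pm\epsilon)\bigl(k^{1/\alpha}R-|y-k^{1/\alpha}\bar{x}|\bigr)_{+}^{\alpha}$, which by \eqref{limitu0} converges pointwise to $(C\pm\epsilon)(y_{1})_{+}^{\alpha}=(C\pm\epsilon)(y_{1})_{+}^{2/(m-\beta)}$ as $k\to\infty$.

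The key step is to extract a convergent subsequence and identify the limit. Since $b>0$, the reaction is absorptive, and $v_{\pm\epsilon}^{k}$ is bounded above by the corresponding $b=0$ rescaled family, which is uniformly bounded on compact subsets of $\{t>0\}$ by the argument already carried out in case (i) of Lemma~\ref{lemma1} (alternatively, a supersolution of the form $(C+1)(1+|y|^{2})^{\alpha/2}(1-\mu t)^{1/(1-m)}$ works exactly as in case (iii) there). The DiBenedetto-type H\"older regularity \cite{Dibe-Sv}, combined with Arzel\`a--Ascoli and a diagonal extraction, produces a subsequential limit which, by continuity of the data in \eqref{limitu0} and stability of weak solutions, solves the $N$-dimensional CP with initial value $(C\pm\epsilon)(y_{1})_{+}^{2/(m-\beta)}$. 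By uniqueness of the CP this limit depends only on $y_{1}$ and $t$, hence it coincides with the one-dimensional self-similar solution \eqref{selfsimilarform1} with shape function $h_{C\pm\epsilon}$ solving \eqref{selfsimilarODE2}. The whole sequence then converges to this unique limit. Evaluating along $y_{1}=\rho\,t^{(m-\beta)/[2(1-\beta)]}$ with $y_{2}=\cdots=y_{N}=0$, undoing the rescaling, and invoking radial symmetry of $u_{\pm\epsilon}$ gives \eqref{asympsolution2} for $u_{\pm\epsilon}$; continuous dependence of $h_{C}$ and $\zeta_{*}$ on $C$ then concludes.

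The main obstacle, as in Lemma~\ref{lemma1}(iii), is the compactness step: verifying that the uniform bound on the rescaled family $\{v_{\pm\epsilon}^{k}\}$ holds on a time strip independent of $k$, so that H\"older regularity and the passage to the limit are legitimate. Once that is secured, the identification of the limit is immediate from the uniqueness theory for the 1D Cauchy problem with self-similar data and the fact that the critical scaling preserves the full PDE.
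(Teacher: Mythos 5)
Your proposal is correct and follows essentially the same route as the paper: translate to $\bar{x}$, apply the critical rescaling under which the exponent of $k$ on the reaction term vanishes so the full equation \eqref{CauchyProblem1} is preserved, obtain compactness via comparison with the $b=0$ family (as in Lemma~\ref{lemma1}(ii)), identify the limit with the one-dimensional self-similar solution \eqref{selfsimilarform1}--\eqref{selfsimilarODE2} by uniqueness, and conclude by radial symmetry and continuous dependence of $h$ and $\zeta_*$ on $C$. Your explicit statement that the limit solves the reaction--diffusion CP (not the $b=0$ one) is in fact more consistent than the paper's own wording at that point.
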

\begin{proof}[Proof] As before, from \eqref{CauchyProblem3} we deduce \eqref{local1}-\eqref{local3} in the context of this lemma. 
Changing the variable $y=x+\bar{x}$ with $\bar{x}=(R,0,....,0)$, the function
$v_{\pm\epsilon}(y,t)=u_{\pm\epsilon}(y-\bar{x},t)$ solves the problem \eqref{changedproblem1},\eqref{changedproblem2} with $\alpha=2/(m-\beta)$.
Rescaled function 
\begin{equation}\label{rescaling1}
v_{\pm\epsilon}^k(y,t)=kv_{\pm\epsilon}(k^{\frac{\beta-m}{2}}y,k^{\beta-1}t)
\end{equation}
solves the problem
 \[\left\{
  \begin{array}{ll}
w_t-\Delta w^m+bw^{\beta}=0,\ y\in \mathbb{R}^N,\ t>0. \\
w(y,0)=(C\pm\epsilon)(k^{\frac{m-\beta}{2}}R-|y-k^{\frac{m-\beta}{2}}\bar{x}|)^{\frac{2}{m-\beta}}_+, \ y\in \mathbb{R}^N \\
 \end{array}
 \right.\]
Since \eqref{limitu0} is valid with $\alpha=2/(m-\beta)$, the limit of the sequence $\{v_{\pm\epsilon}^k\}$ solves the 
CP \eqref{CauchyProblem1},\eqref{CauchyProblem2} with $b=0$ and $u_0(x)=C(x_1)^{\frac{2}{m-\beta}}_+$.    
Due to uniqueness of the solution to the CP (\cite{KPV}), the latter coincides with the solution of the 1D CP  \eqref{1dreactiondiffusion},\eqref{1dreactiondiffusionic}, which is of self-similar form \eqref{selfsimilarform1} with the shape function $h(\zeta)$ solving nonlinear ODE problem \eqref{selfsimilarODE2} and having finite interface $\zeta_*$ \cite{Abdulla1}. Therefore, we have
\[\lim_{k \rightarrow \infty}k v(k^{\frac{\beta-m}{2}}y,k^{\beta-1}t)=t^{\frac{1}{1-\beta}}h\Big(y_1t^{-\frac{m-\beta}{2(1-\beta)}}\Big).\]
The remainder of the proof of \eqref{asympsolution2} proceeds similar to the proof of  \eqref{localsolution1} in Lemma~\ref{lemma1} (i) and (ii). 
In particular, if $C>C_*$ we have\ $\zeta_*<0$ and for $\forall \rho \in (\zeta_*,0)$
\begin{equation}\label{equivalentasymp-1}
u(x,t)\sim h(\rho)  t^{\frac{1}{1-\beta}},\ t=\eta(x)=\Big(\frac{R-|x|}{\rho}\Big)^{\frac{2(1-\beta)}{m-\beta}}~as~|x|\rightarrow R+,
\end{equation}
while If $C<C_*$ we have $\zeta_*>0$ and for $\forall \rho >\zeta_*$
\begin{equation}\label{equivalentasymp-2}
u(x,t)\sim h(\rho)  t^{\frac{1}{1-\beta}},\ t=\eta(x)=\Big(\frac{R-|x|}{\rho}\Big)^{\frac{2(1-\beta)}{m-\beta}}\ as\ |x|\rightarrow R-.
\end{equation}
\end{proof}
\begin{lem}\label{lemma4}
Let $b>0,\ 0<\beta<1, \alpha>\frac{2}{m-\beta}$. Then solution $u$ of the CP \eqref{CauchyProblem1}-\eqref{CauchyProblem3} satisfies \eqref{shrinkingsolution}.
\end{lem}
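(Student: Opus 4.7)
The plan is to mimic the rescaling argument of Lemmas~\ref{lemma1}(ii)--(iii) and \ref{lemma3}. As there, \eqref{CauchyProblem3} yields \eqref{local1}--\eqref{local3}, so it suffices to prove \eqref{shrinkingsolution} for the reference solutions $u_{\pm\epsilon}$ with initial data $(C\pm\epsilon)(R-|x|)^\alpha_+$, and then pass to $\epsilon\downarrow 0$ using the continuous dependence of $l_*$ on $C$.

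After the translation $y = x + \bar{x}$ with $\bar{x}=(R,0,\ldots,0)$, I would set
\begin{equation*}
v^k_{\pm\epsilon}(y,t) = k\, v_{\pm\epsilon}\bigl(k^{-1/\alpha}y,\; k^{\beta-1}t\bigr).
\end{equation*}
The space exponent is chosen so that the initial data converge to $(C\pm\epsilon)(y_1)^\alpha_+$ as in \eqref{limitu0}, and the time exponent is chosen so that the time derivative and the absorption term balance. A direct calculation gives
\begin{equation*}
\partial_t v^k_{\pm\epsilon} \;-\; k^{\beta + 2/\alpha - m}\,\Delta (v^k_{\pm\epsilon})^m \;+\; b\,(v^k_{\pm\epsilon})^\beta = 0,
\end{equation*}
and the hypothesis $\alpha > 2/(m-\beta)$ (equivalently, $\beta + 2/\alpha - m < 0$) forces the Laplacian coefficient to vanish as $k\to\infty$. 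Thus the limiting equation is the first-order (in $t$) ODE $w_t + bw^\beta = 0$, contrary to Lemmas~\ref{lemma1}--\ref{lemma3}, where the limit was a genuine PDE.

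Uniform boundedness of $\{v^k_{\pm\epsilon}\}$ on a strip $\{0\le t\le t_0\}$ would follow by comparison with the explicit supersolution $g(y,t) = (C+1)(1+|y|^2)^{\alpha/2}(1-\mu t)^{1/(1-m)}$ on a localized Dirichlet problem (following the last part of the proof of Lemma~\ref{lemma1}(iii)); because $b>0$, retaining the absorption term only helps the barrier inequality. Interior H\"older estimates for nonlinear degenerate parabolic equations \cite{Dibe-Sv} together with Arzel\`a--Ascoli and a diagonalization argument then supply a subsequential limit $v_\infty$. Passing to the limit in the weak formulation, the diffusion contribution drops out, so $v_\infty$ satisfies, pointwise in $y$, $(v_\infty)_t + b v_\infty^\beta = 0$ with $v_\infty(y,0) = C(y_1)^\alpha_+$, whose unique nonnegative solution is
\begin{equation*}
v_\infty(y,t) = \bigl[\,C^{1-\beta}(y_1)^{\alpha(1-\beta)}_+ - b(1-\beta)t\,\bigr]_+^{1/(1-\beta)}.
\end{equation*}
Uniqueness of this limit forces the whole sequence to converge. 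Evaluating at $y_2=\cdots=y_n=0$ and $y_1 = l\,s^{1/(\alpha(1-\beta))}$ with $l > l_*$, unscaling via $t = k^{\beta-1}s$, and sending $\epsilon\downarrow 0$ through the squeeze \eqref{local3} and continuity of $l_*$ in $C$ yields \eqref{shrinkingsolution}.

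The main obstacle is the rigorous identification of the limit. Because the Laplacian coefficient collapses to zero while the initial data and absorption remain $O(1)$, the rescaled problem is a singular perturbation from a nonlinear degenerate parabolic PDE to a scalar ODE; standard stability theorems for such PDEs do not apply off-the-shelf. One must carefully exploit the explicit barrier $g$ to quantify how small the diffusive contribution is against a test function, and invoke Theorem~\ref{comparison} to exclude the spurious ODE trajectories that could otherwise leave zero, since $w^\beta$ is not Lipschitz at $0$ for $\beta < 1$. This is the technical heart of the argument.
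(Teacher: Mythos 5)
Your proposal follows essentially the same route as the paper: the same localization to a Dirichlet problem, the identical rescaling $v^k_{\pm\epsilon}(y,t)=kv_{\pm\epsilon}(k^{-1/\alpha}y,k^{\beta-1}t)$ producing the vanishing diffusion coefficient $k^{(2-\alpha(m-\beta))/\alpha}$, a barrier of the form $(C+1)(1+|y|^2)^{\alpha/2}\times(\text{time factor})$ for uniform boundedness, and compactness plus uniqueness to identify the limit with the explicit solution of $w_t+bw^\beta=0$, which evaluated on $y_1=l\,t^{1/(\alpha(1-\beta))}$ gives \eqref{shrinkingsolution}. The only differences are cosmetic (the paper uses the time factor $e^t$ rather than $(1-\mu t)^{1/(1-m)}$, and it defers the limit identification to the argument of Lemma~\ref{lemma1}(iii)), so your write-up is correct and matches the paper's proof.
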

\begin{proof}[Proof] As in the proof of Lemma~\ref{lemma1}, case (iii) we set \eqref{localizedeq1}-\eqref{localizedeq3}, deduce \eqref{local2} for $R_\epsilon \leq |x| \leq 2R, 0\leq t \leq\delta_\epsilon$, and derive the transformed problem \eqref{transformedloc1}-\eqref{transformedloc3} in the context of this lemma.
Rescaled solution according to invariant scale for reaction equation
\begin{equation}\label{rescaling1}
v_{\pm\epsilon}^k(y,t)=kv_{\pm\epsilon}(k^{-\frac{1}{\alpha}}y,k^{\beta-1}t)
\end{equation}
solves the problem
 \begin{gather}
\tilde{L}_kw\equiv w_t-k^{\frac{2-\alpha(m-\beta)}{\alpha}}\Delta w^m+bw^{\beta}=0,\ |y-k^{\frac{1}{\alpha}}\bar{x}|<2Rk^{\frac{1}{\alpha}},\ 0<t<k^{1-\beta}\delta,\label{rescaledreaction1}\\
w(y,0)=(C\pm\epsilon)(k^{\frac{1}{\alpha}}R-|y-k^{\frac{1}{\alpha}}\bar{x}|)^{\alpha}_+, \ |y-k^{\frac{1}{\alpha}}\bar{x}|<2Rk^{\frac{1}{\alpha}},\label{rescaledreaction2}\\
w|_{|y-k^{\frac{1}{\alpha}}\bar{x}|=2Rk^{\frac{1}{\alpha}}}=0,\ 0\leq t \leq k^{1-\beta}\delta. \label{rescaledreaction3}
 \end{gather}
To prove the uniform boundedness of $\{v_{\pm\epsilon}^k\}$ consider a function
\[ g(x,t)=(C+1)(1+|y|^2)^{\frac{\alpha}{2}}e^t, \ y\in\mathbb{R}^N, \ 0\leq t \leq T, \]
for some fixed $T>0$. We have
\begin{gather}
\tilde{L}_kg\geq g(1-\Gamma),\nonumber\\
 \Gamma=(C+1)^m\alpha m e^{(m-1)t}(1+|y|^2)^{\frac{\alpha(m-1)-4}{2}}(N+(\alpha m +N-2)|y|^2) k^{\frac{2-\alpha(m-\beta)}{\alpha}}\label{Gamma}\\
 \Gamma=O(k^\gamma),\ \text{uniformly for} \ |y-k^{\frac{1}{\alpha}}\bar{x}|\leq 2Rk^{\frac{1}{\alpha}},\ 0\leq t\leq T, \ \text{as} \ k\to+\infty,\nonumber
 \end{gather}
where,
\[
    \gamma=\left\{
                \begin{array}{ll}
                  {\frac{2-\alpha(m-\beta)}{\alpha}},\ if \ \alpha<{\frac{2}{m-1}}\\
                  \beta-1,\qquad if\quad \alpha\geq{\frac{2}{m-1}}\\
                  \end{array}
              \right.
  \]
The estimation \eqref{comparisononparabolicb} is clearly satisfied. Therefore, for sufficiently large $k$, $g$ is a supersolution of \eqref{rescaledreaction1}-\eqref{rescaledreaction3}
and \eqref{uniformbound} is true in this context in $0\leq t \leq T$. The proof of the convergence of the sequence $\{v_{\pm\epsilon}^k\}$, and desired estimation \eqref{shrinkingsolution}
is completed as in the proof case (iii) of Lemma~\ref{lemma1}. \end{proof}

\section{Proofs of the main results.}\label{proofs}
 In this section we prove the main results described in section 2.
\begin{proof}[Proof of Theorem~\ref{theorem1}] The estimation \eqref{localsolution1}, and its equivalent \eqref{equiv 6} are proved in 
Lemma~\ref{lemma1}. They imply that $\eta_+$ is defined and finite, and 
\begin{equation}\label{eta+limsup}
\limsup_{|x|\rightarrow R+}\frac{\eta_+(x)}{(|x|-R)^{2+\alpha(1-m)}}\leq (-\xi_*)^{\alpha(m-1)-2}.
\end{equation}
As before, we deduce \eqref{local1}-\eqref{local3} from \eqref{CauchyProblem3}, and consider the problem \eqref{changedproblem1}-\eqref{changedproblem2} for $v_\epsilon(y,t)=u_\epsilon(y-\bar{x},t)$.
Let $w_\epsilon(y,t)=w_\epsilon(y_1,t)$ be a solution of the Cauchy problem  \eqref{1ddiffusion},\eqref{1ddiffusionic} with $C$ replaced by $C+\epsilon$. Assume that $b\geq 0$. Since
\begin{equation}\label{comparisoninitialfunction}
(y_1)^{\alpha}_+\geq (R-|y-\bar{x}|)_+^{\alpha},~y\in\mathbb{R}^N,
\end{equation}
from the comparison theorem it follows that
\begin{equation}\label{vC+epsilon2}
0\leq v_\epsilon(y,t)\leq w_\epsilon(y_1,t),~y\in \mathbb{R}^N, t>0.
\end{equation}
From  \eqref{selfsimilarform}-\eqref{CP6}it follows that
\begin{equation}\label{vC+epsilon3}
v_\epsilon(y,t)=0, \qquad{for} \ y_1\leq (C+\epsilon)^{\frac{m-1}{2-\alpha(m-1)}}\xi_*^{'} t^{\frac{1}{2+\alpha(1-m)}},~t\geq 0
\end{equation}
that is to say,
\begin{equation}\label{uC+epsilon}
u_\epsilon(x,t)=0, \qquad{for} \ x_1\leq -R+(C+\epsilon)^{\frac{m-1}{2-\alpha(m-1)}}\xi_*^{'} t^{\frac{1}{2+\alpha(1-m)}},~t\geq 0
\end{equation}
From \eqref{local1}-\eqref{local3} it follows that for arbitrary $\epsilon>0$ $\exists 0<R_\epsilon<R$ and $\delta_\epsilon$ such that
\begin{equation}\label{uC+epsilon1}
u(x,t)=0, \qquad{for} \ x_1\leq -R+(C+\epsilon)^{\frac{m-1}{2-\alpha(m-1)}}\xi_*^{'} t^{\frac{1}{2+\alpha(1-m)}},~0\leq t\leq \delta_\epsilon.
\end{equation}
Due to radial symmetricity of $u$ we have
\begin{equation}\label{eta+estimate1}
u(x,t)=0,  \qquad{for} \ |x|\geq R-(C+\epsilon)^{\frac{m-1}{2-\alpha(m-1)}}\xi_*^{'} t^{\frac{1}{2+\alpha(1-m)}},~0\leq t\leq \delta_\epsilon.
\end{equation}
This implies that for some $\gamma_\epsilon >0$ we have
\begin{equation}\label{eta+estimate2}
\eta_+(x)\geq \Big ( \frac{R-|x|}{(C+\epsilon)^{\frac{m-1}{2-\alpha(m-1)}}\xi_*^{'}}\Big)^{2+\alpha(1-m)}~\mathrm{for}~R<|x|\leq R+\gamma_\epsilon
\end{equation}
Therefore, we have
\[\liminf_{|x|\rightarrow R+}\frac{\eta_+(x)}{(|x|-R)^{2+\alpha(1-m)}}\geq (-(C+\epsilon)^{\frac{m-1}{2-\alpha(m-1)}}\xi_*^{'})^{\alpha(m-1)-2}.\]
By taking~$\epsilon \downarrow 0$ we have
\begin{equation}\label{eta+liminf}
\liminf_{|x|\rightarrow R+}\frac{\eta_+(x)}{(|x|-R)^{2+\alpha(1-m)}}\geq (-\xi_*)^{\alpha(m-1)-2}.
\end{equation}
From \eqref{eta+limsup} and \eqref{eta+liminf} desired estimation \eqref{eta+} follows if $b\geq 0$. If $b<0, \ \beta\geq 1$, we consider a function
\[ \bar{u}_\epsilon=exp(-bt)u_\epsilon \Big ( x,\Big( b(1-m)\Big )^{-1} \Big [ exp\Big (b(1-m)t\Big )-1\Big ]\Big ) \] 
where $u_\epsilon$ is a solution of the CP \eqref{CauchyProblem1}, \eqref{CauchyProblem4} with $b=0$ and $C+\epsilon$. Accordingly, $\bar{u}_\epsilon$ solves the CP \eqref{CauchyProblem1}, \eqref{CauchyProblem4} with $\beta=1$ and $C+\epsilon$. 
By continuity of solution we can choose $\delta_\epsilon>0$, $0<R_\epsilon<R$ such that 
\[ \bar{u}_\epsilon <1 \ \qquad{for} \ |x|>R_\epsilon, 0<t<\delta_\epsilon. \]
Therefore, $\bar{u}_\epsilon$ is a supersolution of the PDE \eqref{CauchyProblem1} with $b<0, \ \beta \geq 1$. 
Similar arguments used in the derivation of \eqref{eta+estimate1} imply that
\begin{equation}\label{eta+estimate b negative}
u(x,t)=0,  \qquad{for} \ |x|\geq R-(C+\epsilon)^{\frac{m-1}{2-\alpha(m-1)}}\xi_*^{'} \Big (\frac{e^{b(1-m)t)}-1}{b(1-m)} \Big )^{\frac{1}{2+\alpha(1-m)}},~0\leq t\leq \delta_\epsilon.
\end{equation}
Therefore, for some $\gamma_\epsilon >0$ we have
\begin{equation}\label{eta+estimate3}
\eta_+(x)\geq \frac{1}{b(1-m)} \log \Big [ 1+b(1-m) \Big (\frac{R-|x|}{(C+\epsilon)^{\frac{m-1}{2-\alpha(m-1)}}\xi_*^{'}}\Big)^{2+\alpha(1-m)}\Big ], \ R<|x|\leq R+\gamma_\epsilon
\end{equation}
Passing to {\it liminf} as $|x|\to R+$, and then passing to limit as $\epsilon \to 0$, \eqref{eta+liminf} follows. As before, 
from \eqref{eta+limsup} and \eqref{eta+liminf} desired estimation \eqref{eta+} again follows.
\end{proof}
\begin{proof}[Proof of Theorem~\ref{theorem3}] Asymptotic estimation \eqref{asympsolution2}, and its equivalents \eqref{equivalentasymp-1}, \eqref{equivalentasymp-2} are proved in Lemma~\ref{lemma3}. If $C>C_*$, from  \eqref{equivalentasymp-1} it follows that $\eta_+$ is defined and finite, and 
\begin{equation}\label{eta+limsup-1} 
\limsup_{|x|\rightarrow R+}\frac{\eta_+(x)}{\Big(|x|-R\Big)^{\frac{2(1-\beta)}{m-\beta}}}\leq (-\zeta_*)^{\frac{2(1-\beta)}{\beta-m}}.
\end{equation}
Similarly, if $C<C_*$, from  \eqref{equivalentasymp-2} it easily follows that
\begin{equation}\label{eta-liminf-1}
\liminf_{|x|\rightarrow R-}\frac{\eta_-(x)}{\Big(R-|x|\Big)^{\frac{2(1-\beta)}{m-\beta}}}\geq (\zeta_*)^{\frac{2(1-\beta)}{\beta-m}}.
\end{equation}
First, consider the global case of initial function \eqref{CauchyProblem4}. Changing the variable $y=x+\bar{x}$ with $\bar{x}=(R,0,....,0)$, the function
$v(y,t)=u(y-\bar{x},t)$ solves the problem \eqref{changedproblem1}-\eqref{changedproblem2}  with $\alpha=2/(m-\beta)$ and $\epsilon=0$.
As before, from \eqref{comparisoninitialfunction} with $\alpha=2/(m-\beta)$ and comparison theorem, \eqref{vC+epsilon2} with $\epsilon=0$ follows. In our context, $w(y_1,t)$ is a unique solution
of the CP \eqref{1dreactiondiffusion},\eqref{1dreactiondiffusionic}, which is of self-similar form \eqref{selfsimilarform1} with the shape function $h$ solving nonlinear ODE problem \eqref{selfsimilarODE2}, and having a finite interface $\zeta_*$. If $C>C_*$ from \cite{Abdulla1} it follows that
\begin{equation}\label{VW2}
w(y_1,t)\leq  C_1t^{\frac{1}{1-\beta}}(-\zeta_1+\zeta)^{\frac{2}{m-\beta}}_+\ for\ -\infty<y_1\leq 0,\ 0\leq t < +\infty.
\end{equation}
(see Appendix for explicit values of the constants $C_1, \zeta_1$). From \eqref{vC+epsilon2} with $\epsilon=0$ we have
\begin{equation}\label{C>C_*expansion}
u(x,t)\leq  C_1\Big(x_1+R-\zeta_1t^{\frac{m-\beta}{2(1-\beta)}}\Big)^{\frac{2}{m-\beta}}_+,\   -\infty<x_1\leq-R, \ 0\leq t<+\infty.
\end{equation}
Due the radial symmetricity of $u$ from \eqref{C>C_*expansion} we deduce that
\begin{equation}\label{C>C_*expansion-1}
u(x,t)\leq  C_1\Big(R-|x|-\zeta_1t^{\frac{m-\beta}{2(1-\beta)}}\Big)^{\frac{2}{m-\beta}}_+,\   |x|\geq R, \ 0\leq t<+\infty,
\end{equation}
which imply
\[\eta_+(x)\geq\Big(\frac{|x|-R}{-\zeta_1}\Big)^{\frac{2(1-\beta)}{m-\beta}},\ |x|\geq R\]
and therefore,
\begin{equation}\label{eta_+lower2}
\liminf_{|x|\rightarrow R+}\frac{\eta_+(x)}{(|x|-R)^{\frac{2(1-\beta)}{m-\beta}}}\geq \big(-\zeta_1\big)^{\frac{2(1-\beta)}{\beta-m}}
\end{equation}
From \eqref{eta+limsup-1} and \eqref{eta_+lower2}, \eqref{eta_+1} follows.

Assume that $0<C<C_*$. From \cite{Abdulla1} it follows that if $m+\beta>2$, then
\[0\leq w(y_1,t)\leq \Big[C^{1-\beta}(y_1)^{\frac{2(1-\beta)}{m-\beta}}_+-b(1-\beta)\big(1-(C/C_*)^{m-\beta}\big)t\Big]^{\frac{1}{1-\beta}}_+,\ y_1\in \mathbb{R}, \ 0\leq t < +\infty\]
 From \eqref{vC+epsilon2} with $\epsilon=0$ we have
 \[0\leq u(x,t)\leq \Big[C^{1-\beta}(x_1+R)^{\frac{2(1-\beta)}{m-\beta}}_+-b(1-\beta)\big(1-(C/C_*)^{m-\beta}\big)t\Big]^{\frac{1}{1-\beta}}_+,\ x\in \mathbb{R}^N, \ 0\leq t < +\infty.\]
 Due the radial symmetricity of $u$ it follows that
 \begin{equation}\label{m=beta,2-2}
 0\leq u(x,t)\leq \Big[C^{1-\beta}(R-|x|)^{\frac{2(1-\beta)}{m-\beta}}_+-b(1-\beta)\big(1-(C/C_*)^{m-\beta}\big)t\Big]^{\frac{1}{1-\beta}}_+,\ x\in \mathbb{R}^N, \ t\geq 0.\
 \end{equation}
 If\ $1\leq m<2-\beta$,\ then from \cite{Abdulla1} it follows that
\[0\leq w(y_1,t)\leq C_2\Big(y_1-\zeta_2t^{\frac{m-\beta}{2(1-\beta)}}\Big)^{\frac{2}{m-\beta}}_+,\ y_1\leq l_1t^{\frac{m-\beta}{2(1-\beta)}}, 0\leq t <+\infty\]
(the values of the constants $C_2>0, 0<\zeta_2<l_1$ are given in Appendix). From \eqref{vC+epsilon2} with $\epsilon=0$ we have
\[0\leq u(x,t)\leq C_2\Big(x_1+R-\zeta_2t^{\frac{m-\beta}{2(1-\beta)}}\Big)^{\frac{2}{m-\beta}}_+,\ x_1\leq -R+l_1t^{\frac{m-\beta}{2(1-\beta)}}, 0\leq t <+\infty\]
and due to radial symmetricity of the solution $u$ it follows that
\begin{equation}\label{upper11}
0\leq u(x,t)\leq C_2\Big(R-|x|-\zeta_2t^{\frac{m-\beta}{2(1-\beta)}}\Big)^{\frac{2}{m-\beta}}_+,\ |x|\geq R-l_1t^{\frac{m-\beta}{2(1-\beta)}}, 0\leq t <+\infty.
\end{equation}
From \eqref{m=beta,2-2} and \eqref{upper11} it follows that for some $\mu>0$
\[\eta_-(x)\leq \Big(\frac{R-|x|}{\bar{\zeta}}\Big)^{\frac{2(1-\beta)}{m-\beta}}, \ R-\mu \leq |x| <R,\]
which imply
\begin{equation}\label{limsupshrinkingcase}
\limsup_{|x|\rightarrow R-}\frac{\eta_-(x)}{\Big(R-|x|\Big)^{\frac{2(1-\beta)}{m-\beta}}}\leq (\zeta_2)^{\frac{2(1-\beta)}{\beta-m}}.
\end{equation}
From \eqref{eta-liminf-1} and \eqref{limsupshrinkingcase}, \eqref{eta_-1} follows.

In the local case when initial condition satisfies \eqref{CauchyProblem3}, we first deduce \eqref{local1}-\eqref{local3} in the context of this theorem, and then apply the presented proof
to $u_{\pm}$ and subsequently pass to limit as $\epsilon \to 0$. 

Note that in the special case $m+\beta=2$ as in Corollary~\ref{corollary}, $w_\epsilon(y_1,t)$ is a unique solution
of the CP \eqref{1dreactiondiffusion},\eqref{1dreactiondiffusionic} with\ $C$\ replaced by \ $C+\epsilon$ given as follows
\[w_\epsilon(y_1,t)= (C+\epsilon)(y_1-\zeta^\epsilon_*t)^{\frac{1}{1-\beta}}\]
where $\zeta^\epsilon_*$ is defined by \eqref{m+beta=2} with $C$ replaced by $C+\epsilon$.
\end{proof}
\begin{proof}[Proof of Theorem~\ref{theorem4}] Asymptotic estimation \eqref{shrinkingsolution} is proved in Lemma~\ref{lemma4}. It implies that for any $l>l_*$ there exists $\gamma_l>0$ such that 
  \[\eta_-(x)\geq \Big(\frac{R-|x|}{l}\Big)^{\alpha(1-\beta)},\quad R-\gamma_l\leq |x|<R.\]
Passing to $\liminf$ as $|x|\to R-$, followed by limit as $l\to l_*$, we have  
\begin{equation}\label{eta_-lower4}
\liminf_{|x|\rightarrow R-} \frac{\eta_-(x)}{(R-|x|)^{\alpha(1-\beta)}}\geq \big(l_*\big)^{\alpha(\beta-1)}
\end{equation}
To prove the opposite inequality, first from \eqref{CauchyProblem3} we deduce \eqref{local1}-\eqref{local3} in the context of this theorem. Changing the variable $y=x+\bar{x}$ with $\bar{x}=(R,0,....,0)$, the function
$v_{\pm\epsilon}(y,t)=u_{\pm\epsilon}(y-\bar{x},t)$ solves the problem \eqref{changedproblem1}-\eqref{changedproblem2}  with $\alpha>2/(m-\beta)$.
Let $w_\epsilon(y,t)$ be a solution of the Cauchy-Dirichlet problem for the PDE \eqref{changedproblem1} in 
\[ \mathcal{D}_\delta=\{(y,t)\in \mathbb{R}^N\times (0,\delta]: y_1 < R-R_\epsilon\} \]
under the conditions:
\begin{gather}
w(y,0)=(C+2\epsilon) (y_1)_+^\alpha, \ -\infty <y_1\leq R-R_\epsilon, (y_2,...,y_N)\in \mathbb{R}^{N-1}\label{changedproblemic-3}\\
w\Big |_{y_1=R-R_\epsilon}=(C+2\epsilon)(R-R_\epsilon), \ (y_2,...,y_N)\in \mathbb{R}^{N-1}, \ 0\leq t \leq \delta.\label{changedproblembc-3}
\end{gather}
From \eqref{comparisoninitialfunction} it follows that
\begin{equation}\label{comparison-4}
v_\epsilon(y,0)\leq w_\epsilon(y,0), \ -\infty <y_1\leq R-R_\epsilon, (y_2,...,y_N)\in \mathbb{R}^{N-1}.
\end{equation}
Due to finite speed of propagation property and continuity of $v_\epsilon$ in $\overline{\mathcal{D}}_\delta$ it follows that for some $\delta_\epsilon>0$ we have
\begin{equation}\label{comparisonboundary}
v_\epsilon\Big|_{y_1=R-R_\epsilon} \leq (C+\epsilon)(R-R_\epsilon) \leq w_\epsilon\Big|_{y_1=R-R_\epsilon}, \ 0\leq t \leq \delta_\epsilon.
\end{equation}
From \eqref{comparisoninitialfunction},\eqref{comparisonboundary} and comparison Theorem~\ref{comparison} it follows that
\begin{equation}\label{compshrinkingcase}
v_\epsilon(y,t)\leq w_\epsilon(y,t), \ (y,t)\in\overline{\mathcal{D}}_{\delta_\epsilon}.
\end{equation}
Due to uniqueness of the solution to the Cauchy-Dirichlet problem  \eqref{changedproblem1},\eqref{changedproblemic-3},\eqref{changedproblembc-3} in $\mathcal{D}_{\delta_\epsilon}$,
we have $w_\epsilon(y,t)=w_{1\epsilon}(y_1,t)$, where the latter is a unique solution of the one-dimensional Cauchy-Dirichlet problem 
\begin{gather}
w_t-w^m_{y_1y_1}+bw^\beta=0, \ -\infty<y_1<y_{1\epsilon}\coloneqq R-R_\epsilon, 0 < t <\delta_\epsilon\label{cd-1}\\
w(y_1,0)=(C+2\epsilon) (y_1)_+^\alpha, \ -\infty <y_1\leq y_{1\epsilon}\label{cd-2}\\
w({y_{1\epsilon}},t)=(C+2\epsilon)y_{1\epsilon}, \ 0\leq t \leq \delta_\epsilon.\label{cd-3}
\end{gather}
From \cite{Abdulla1} it follows that if $m+\beta\geq 2$ then
\[0\leq w_{1\epsilon}(y_1,t) \leq [(C+3\epsilon)^{1-\beta}(y_1)^{\alpha(1-\beta)}_+-b(1-\beta)(1-\epsilon)t)^{\frac{1}{1-\beta}}]_+, \ -\infty<y_1\leq y_{1\epsilon}, 0\leq t \leq\delta_\epsilon. \]
Therefore, from \eqref{local3},\eqref{compshrinkingcase} it follows that
\[0 \leq u(x,t) \leq ((C+3\epsilon)^{1-\beta}(x_1+R)^{\alpha(1-\beta)}_+-b(1-\beta)(1-\epsilon)t)^{\frac{1}{1-\beta}}_+, \ -\infty<x_1\leq -R_\epsilon, 0\leq t \leq \delta_\epsilon\]
and due to the radial symmetricity of the solution $u$ we deduce the estimation
\[0\leq u(x,t) \leq \Big \{(C+3\epsilon)^{1-\beta}(R-|x|)^{\alpha(1-\beta)}_+-b(1-\beta)(1-\epsilon)t\Big\}^{\frac{1}{1-\beta}}_+,\ |x|\geq R_\epsilon, 0\leq t \leq \delta_\epsilon.\]
Therefore, we have
\[\eta_-(x)\leq \frac{(C+3\epsilon)^{1-\beta}(R-|x|)^{\alpha(1-\beta)}}{b(1-\beta)(1-\epsilon)}, \ R_\epsilon \leq |x|<R.\]
Taking $\limsup$ as $|x|\to R-$, followed by the limit as $\epsilon \downarrow 0$ we derive
\begin{equation}\label{eta_-upper4}
\limsup_{|x|\rightarrow R-} \frac{\eta_-(x)}{(R-|x|)^{\alpha(1-\beta)}}\leq \big(l_*\big)^{\alpha(\beta-1)}
\end{equation}
From \eqref{eta_-lower4} and \eqref{eta_-upper4}, \eqref{shrinkinginterface} follows.

If\ $1\leq m+\beta<2$\ from \cite{Abdulla1} it follows that for arbitrary $l>l_*$ and for all sufficiently small $\epsilon>0$ there exists $\delta_\epsilon(l)>0$ such that the solution of the Cauchy-Dirichlet problem \eqref{cd-1}-\eqref{cd-3} satisfies the following estimation:
\begin{equation}\label{compshrinkingcase-2}
0\leq w_\epsilon(y_1,t) \leq C_3\Big(y_1-\zeta_3t^{\frac{1}{\alpha(1-\beta)}}\Big)^{\alpha}_+, \ -\infty<y_1\leq lt^{\frac{1}{\alpha(1-\beta)}}, 0\leq t\leq \delta_\epsilon,
\end{equation}
where
\[ \zeta_3=\big(l_*/l\big)^{\alpha(1-\beta)}(1-\epsilon)l,\ C_3=\Big[1-(l_*/l\big)^{\alpha(1-\beta)}(1-\epsilon)\Big]^{-\alpha}\Big[C^{1-\beta}-l^{-\alpha(1-\beta)}b(1-\beta)(1-\epsilon)\Big]^{\frac{1}{1-\beta}}. \]
 From \eqref{local3},\eqref{compshrinkingcase-2} it follows that
\[0 \leq u(x,t) \leq C_3\Big(x_1+R-\zeta_3t^{\frac{1}{\alpha(1-\beta)}}\Big)^{\alpha}_+, \ -\infty<x_1\leq -R+lt^{\frac{1}{\alpha(1-\beta)}}, 0\leq t\leq \delta_\epsilon,\]
and due to radial symmetricity of $u$ we derive the estimation
\begin{equation}\label{upperbound-shrinkingcase}
0\leq u(x,t) \leq C_3\Big(R-|x|-\zeta_3t^{\frac{1}{\alpha(1-\beta)}}\Big)^{\alpha}_+,\ |x|\geq R-lt^{\frac{1}{\alpha(1-\beta)}}, 0\leq t \leq \delta_\epsilon.
\end{equation}
From \eqref{upperbound-shrinkingcase} it follows that for some $\gamma_\epsilon>0$ we have
\[\eta_-(x)\leq \Big(\frac{R-|x|}{\zeta_3}\Big)^{\alpha(1-\beta)},\qquad R-\gamma_\epsilon\leq |x|\leq R\]
Taking $\limsup$ as $|x|\to R-$, followed by limits as $\epsilon\downarrow 0$ and $l\downarrow l_*$ we deduce
\begin{equation}\label{eta_-upper4}
\limsup_{|x|\rightarrow R-}\frac{\eta_-(x)}{(R-|x|)^{\alpha(1-\beta)}}\leq (l_*)^{\alpha(\beta-1)}.   
\end{equation}
From \eqref{eta_-lower4} and \eqref{eta_-upper4}, \eqref{shrinkinginterface} again follows.
\end{proof}
Theorem~\ref{theorem5} is proved through direct application of Theorem~\ref{comparison} to upper and lower bounds given respectively in estimations \eqref{stationary1}, \eqref{stationary2}, \eqref{stationary3}, \eqref{stationary4}, \eqref{stationary4"}.


\newpage
\textbf{Appendix.}\label{sec: appendix}
Here we bring explicit values of the constants used in Sections~\ref{sec:mainresult} and~\ref{proofs}.\\\\
\begin{gather*}
\zeta_1=
\begin{cases}                
-A_1^{\frac{m-1}{2}}\big(1+b(1-\beta)A_1^{\beta-1}\big)^{-\frac{1}{2}}\big(2m(m+\beta)(1-\beta)\big)^{\frac{1}{2}}(m-\beta)^{-1}&~\quad\text{if} \   m+\beta>2\\
-(A_1/C_*)^{\frac{m-\beta}{2}}&~\quad\text{if} \  1\leq m+\beta<2,
\end{cases}\\
C_1=
\begin{cases}                
A_1(-\zeta_1)^{\frac{2}{m-\beta}}&~\quad\text{if} \   m+\beta>2\\
C_*&~\quad\text{if} \    1\leq m+\beta<2,
\end{cases}\\
A_1=w(0,1)=h(0).\\
\zeta_2=
\begin{cases}   
C^{\frac{\beta-m}{2}}\big(b(1-\beta)\big(1-\big(C/C_*\big)^{m-\beta}\big)\big)^{\frac{m-\beta}{2(1-\beta)}} \quad \text{if}~ m+\beta>2\\
\delta_*\Gamma l_1\quad\text{if}~ m+\beta<2,
\end{cases}\\
l_1=C^{\frac{\beta-m}{2}}\Big[b(1-\beta)\Big(\delta_*\Gamma\Big)^{-1}\Big(1-\delta_*\Gamma-\Big(1-\delta_*\Gamma\Big)^{-1}\big(C/C_*\big)^{m-\beta}\Big)\Big]^{\frac{m-\beta}{2(1-\beta)}},\\
\Gamma=1-\big(C/C_*\big)^{\frac{m-\beta}{2}},\quad C_2=C\big(1-\delta_*\Gamma\big)^{\frac{2}{\beta-m}},\\
\quad\text{and $\delta_*\in(0,1)$ satisfies}\\
g(\delta_*)=\max_{[0,1]}g(\delta),\qquad g(\delta)={\delta}^{\frac{2-\beta-m}{m-\beta}}\Big[1-\delta \Gamma-\Big(1-\delta \Gamma\Big)^{-1}\big(C/C_*\big)^{m-\beta}\Big]
\end{gather*}

\end{document}